\theoremstyle{plain}
\newtheorem{thm}{Theorem}
\newtheorem{lem}[thm]{Lemma}
\theoremstyle{definition}
\newtheorem{remark}[thm]{Remark}
\newtheorem{example}[thm]{Example}
\newtheorem{conv}[thm]{Convention}
\newcommand{\ve}{\varepsilon}
\newcommand{\bve}{{\overline{\ve}}}
\newcommand{\bd}{\overleftarrow}
\newcommand{\fd}{\overrightarrow}
\newcommand{\opti}
{\raisebox{-.04cm}{$\stackrel{{\scriptscriptstyle 0}}{{\scriptscriptstyle 
1}}$}}
\newcommand{\N}{{{\mathbb N}}}
\newcommand{\R}{{{\mathbb R}}}
\newcommand{\Q}{{{\mathbb Q}}}
\newcommand{\Z}{{{\mathbb Z}}}
\newcommand{\bx}{{\mathbf{x}}}
\newcommand{\orb}{{\operatorname{orb}}}
\newcommand{\lhe}{{\operatorname{lhe}}}
\newcommand{\rhe}{{\operatorname{rhe}}}
\newcommand{\hf}{{\widehat{f}}}
\newcommand{\hw}{{\widehat{w}}}
\newcommand{\hI}{{\widehat{I}}}
\begin{document}

\title{Itineraries for inverse limits of tent maps: a backward view}

\date{September 2017}

\author{Philip Boyland}
\address{Department of
    Mathematics\\University of Florida\\372 Little Hall\\Gainesville\\
    FL 32611-8105, USA}
\email{boyland@ufl.edu}

\author{Andr\'e de Carvalho}
\address{Departamento de
    Matem\'atica Aplicada\\ IME-USP\\ Rua Do Mat\~ao 1010\\ Cidade
    Universit\'aria\\ 05508-090 S\~ao Paulo SP\\ Brazil}
\email{andre@ime.usp.br}

\author{Toby Hall}
\address{Department of Mathematical Sciences\\ University of
    Liverpool\\ Liverpool L69 7ZL, UK}
\email{t.hall@liverpool.ac.uk}

\keywords{Inverse limits, unimodal maps, tent maps, symbolic dynamics,
  kneading theory}
\subjclass[2010]{37B10, 37E05}

\thanks{The authors would like to thank the
  anonymous referee for pointing out several errors in the first draft of this
  paper, and for suggestions for improving the exposition. This work was
  supported by FAPESP [grant number 2016/04687-9]; and by EU Marie-Curie IRSES
  Brazilian-European partnership in Dynamical Systems [grant number
  FP7-PEOPLE-2012-IRSES 318999 BREUDS]. 
  \\\phantom{x}\quad To appear in Topol.\ Appl.\ (2017),
  \url{https://doi.org/10.1016/j.topol.2017.09.012}. 
  \textcopyright\ 2017. This
  manuscript version is made available under the CC-BY-NC-ND 4.0 license
  \url{http://creativecommons.org/licenses/by-nc-nd/4.0/} 
  }

\begin{abstract}
Previously published admissibility conditions for an element of
$\{0,1\}^\Z$ to be the itinerary of a point of the inverse limit of a
tent map are expressed in terms of forward orbits. We give necessary
and sufficient conditions in terms of backward orbits, which is more
natural for inverse limits. These backward admissibility conditions
are not symmetric versions of the forward ones: in particular, the
maximum backward itinerary which can be realised by a tent map mode
locks on intervals of kneading sequences.
\end{abstract}

\maketitle

\section{Introduction}

Inverse limits of tent maps have been much investigated, not only
because of their intrinsic interest as topological spaces, but also
because they are closely related to other topics in dynamical systems
such as hyperbolic attractors and H\'enon maps. A recent highlight is
the proof by Barge, Bruin, and \v Stimac of the Ingram
Conjecture~\cite{Ingram}, which states that the inverse limits of
distinct tent maps are non-homeomorphic.

Kneading theory is widely used in the study of the dynamics of
unimodal maps, and has been extended to and applied in the context of
inverse limits of tent maps by several authors
(e.g.~\cite{BD,Asymp-Bruin}). A key starting point for the application
of such symbolic techniques is understanding the \emph{admissibility
  conditions} under which a sequence of symbols is realised as the
itinerary of a point of the inverse limit. In previous works, such
admissibility conditions have been adapted from those for kneading
theory of unimodal maps, and as such are based on the forward
itineraries of points. This is somewhat unnatural in the context of
inverse limits, where the main focus is on backward orbits.

In this paper we develop admissibility conditions for inverse limits
which are based on backward itineraries. One might na\"ively expect
these conditions to be symmetric versions of the forward ones but,
with the exception of certain special cases (tent maps of {\em
  irrational} or \emph{rational endpoint} type), this is not the
case. The essential content of the forward conditions is that every
forward sequence must be less than or equal to the kneading sequence
of the tent map~$f$, in the unimodal order. For the backward
conditions, the kneading sequence is replaced by two sequences, so
that backward sequences are bounded by a stepped line. Moreover,
these two sequences mode-lock on intervals in parameter space --- what
changes as the parameter varies within such an interval is the
location of the step between the two sequences.

In Section~\ref{sec:forward-admissibility} we review the forward
admissibility conditions. This theory is well established, but we make
some minor modifications which enable us to give admissibility
conditions which are strictly necessary and sufficient
(Lemmas~\ref{lem:admissible} and~\ref{lem:admiss-invlim-forward}),
which seem not to have appeared before. The basis of the backward
admissibility conditions is the stratification of the space of
unimodal maps by \emph{height}, a number in~$[0,1/2]$ which is
associated to each unimodal map~\cite{HS}. This theory is reviewed in
Section~\ref{sec:height}, before the main results are stated and
proved. Theorem~\ref{thm:symmetric} gives necessary and sufficient
backward admissibility conditions in the symmetric case;
Theorem~\ref{thm:non-symmetric} is the analogous result in the
non-symmetric case; and Theorem~\ref{thm:mode-lock} provides a
striking illustration of the asymmetry of forward and backward
itineraries: the maximum backward itinerary which can
be realised by a tent map mode locks on intervals of kneading
sequences.

\section{Forward admissibility}
\label{sec:forward-admissibility}

\subsection{Basic definitions}
Throughout the paper, $I=[a,b]$
is a compact interval and $f\colon I\to I$ is a tent map of slope
$\lambda\in(\sqrt{2},2)$: that is, there
is some $c\in(a,b)$ such that $f$ has constant
slope~$\lambda$ on $[a,c]$ and constant slope $-\lambda$ on
$[c,b]$. Moreover, we assume that~$I$ is the dynamical interval (or
core) of~$f$, so that $f(c)=b$ and $f(b)=a$.

Let $\{0,1\}^\N$ and $\{0,1\}^\Z$ denote the spaces of semi-infinite and
bi-infinite sequences over $\{0,1\}$, with their natural product topologies. We
denote elements of the former with lower-case letters, and of the latter with
upper-case letters. We write $\sigma\colon\{0,1\}^\N\to\{0,1\}^\N$ and
$\sigma\colon\{0,1\}^\Z\to\{0,1\}^\Z$ for the corresponding shift maps. If
$S\in\{0,1\}^\Z$, we denote by $\fd{S}$ and $\bd{S}$ the elements of
$\{0,1\}^\N$ defined by $\fd{S}_r=S_r$ and $\bd{S}_r=S_{-1-r}$ for $r\ge 0$:
therefore $\fd{\sigma^r(S)}=S_rS_{r+1}\ldots$ and
$\bd{\sigma^r(S)}=S_{r-1}S_{r-2}\ldots$ for each~$r\in\Z$. We say that $S$ {\em
does not end $0^\infty$} (respectively \emph{does not start $0^\infty$}) if
infinitely many of the entries of $\fd{S}$ (respectively~$\bd{S}$) are~$1$.

If $n\ge 1$ then a \emph{word of length~$n$} is an element of
$\{0,1\}^n$. We say that a word~$W$ is \emph{even} (respectively {\em
  odd}) if it contains an even (respectively odd) number of $1$s. If
$s\in\{0,1\}^\N$ and~$W$ is a word of length~$n$, then we write
$s=W\ldots$ to mean that $s_i=W_i$ for $0\le i\le n-1$.

We denote by $\preceq$ the \emph{unimodal order} on~$\{0,1\}^\N$ (also known as
the \emph{parity lexicographical order}), which is defined as follows: if $s$
and $t$ are distinct elements of $\{0,1\}^\N$, then $s\prec t$ if and only if
the word $s_0\,\ldots\, s_r$ is even, where $r\ge 0$ is least with
$s_r\not=t_r$. An element $s$ of $\{0,1\}^\N$ is said to be
\emph{shift-maximal} if $\sigma^r(s)\preceq s$ for all~$r\ge 0$.

There are several different approaches to assigning itineraries in
$\{0,1\}^\N$ to points of~$I$ under the action of~$f$. Those
differences which are not cosmetic are concerned with the
straightforward but vexed question of how to code the critical
point~$c$, and therefore affect the itineraries of only countably
many points. One may introduce a third symbol~$C$; make an arbitrary
choice of $0$ or $1$ as the code of the critical point; allow either
of these symbols, leading to multiple itineraries for certain points
-- an approach whose ramifications are compounded when the critical
point is periodic; or take limits \`a la
Milnor-Thurston~\cite{MT}. The approach which we adopt here is to
code~$c$ with a choice of $0$ or $1$ which depends on~$f$ in the case
where~$c$ is periodic; and to allow either code for~$c$ if~$c$ is not
periodic. This convention, as well as being ideal for our results, has
the added benefit -- quite independent of the main results of the
paper -- of leading to admissibility conditions for itineraries which
are strictly necessary and sufficient (Lemma~\ref{lem:admissible}), at
least in the case of tent maps or other unimodal maps which admit no
homtervals (i.e.\ for which distinct points have distinct itineraries).

Suppose first that~$c$ is a periodic point of~$f$, of period~$n$, and
define $\ve(f)=0$ (respectively $\ve(f)=1$) if an even (respectively
odd) number of the points $\{f^r(c)\,:\,1\le r<n\}$ lie in
$(c,b]$. Then define the \emph{itinerary} $j(x)\in\{0,1\}^\N$ of $x\in
  I$ by
\[
j(x)_r = 
\begin{cases}
 0 & \text{ if }f^r(x)\in [a,c),\\
 1 & \text{ if }f^r(x)\in (c,b],\\
 \ve(f) & \text{ if }f^r(x)=c
\end{cases}
\qquad\text{ for each }r\in\N.
\]

Define the \emph{kneading sequence} $\kappa(f)\in\{0,1\}^\N$ of $f$ by
$\kappa(f)=j(b)$. By construction, $\kappa(f)=(W\ve(f))^\infty$, where~$W\ve(f)$ is
an even word of length~$n$.

\medskip\medskip

In the case where~$c$ is not a periodic point of~$f$, we say that
$s\in\{0,1\}^\N$ \emph{is an itinerary of $x\in I$} if $f^r(x)\in[a,c]$
whenever $s_r=0$, and $f^r(x)\in[c,b]$ whenever $s_r=1$. Therefore
each $x\in I$ has a unique itinerary unless $c\in\orb(x,f)=
\{f^r(x)\,:\,r\ge 0\}$, in which
case it has exactly two itineraries. 

Define the kneading sequence $\kappa(f)\in\{0,1\}^\N$ of~$f$ to be the
itinerary of~$b$ (which is unique since $b=f(c)$ and $c$ is not
periodic). Therefore if $f^r(x)=c$ for some $r\ge 0$, then the two
itineraries of $x$ are $s_0\,\ldots\,s_{r-1}\opti \kappa(f)$ for some
$s_0, \ldots, s_{r-1}\in\{0,1\}$.

\begin{remark}
\label{rmk:order}
If~$s$ is an itinerary for~$x\in I$, then $\sigma^r(s)$ is an
itinerary for $f^r(x)$ for each $r\ge 0$, regardless of whether or
not~$c$ is a periodic point. It is standard (see for
example~\cite{CE,Devaney}) that the unimodal order on itineraries
reflects the usual order on the interval~$I$.  Since~$f$ is uniformly
expanding on each of its two branches, distinct points~$x,y\in I$
cannot share a common itinerary. If $s$ and $t$ are itineraries of $x$
and $y$, we therefore have that $x<y\implies s\prec t$; while if
$s\prec t$, then either $x<y$, or $s$ and~$t$ are the two itineraries
of $x=y$ in the case where~$c$ is not periodic.
\end{remark}

Let \[j_f = \{s\in\{0,1\}^\N\,:\,s \text{ is an itinerary of some
}x\in I\}.\] An element of~$j_f$ is said to be \emph{admissible
  (for~$f$)}.

\medskip\medskip

The \emph{inverse limit}~$\hI$ of $f\colon I\to I$ is defined by 
\[\hI = \{\bx\in I^\Z\,:\, f(x_r)=x_{r+1}\text{ for all
}r\in\Z\},\] topologized as a subspace of the product $I^\Z$. This
definition differs from the standard one, in which only indices $r\le
0$ are considered, but is homeomorphic to it, since $x_0$ determines
$x_r$ for all $r>0$, and is more convenient for our purposes. Let
$\hf\colon\hI\to\hI$ be the shift map defined by $\hf(\bx)_r=x_{r+1}$
for all~$r$, a homeomorphism which is called the \emph{natural
  extension} of~$f$. The projection $\pi_0\colon\hI\to I$ defined by
$\bx\mapsto x_0$ is a semi-conjugacy from $\hf$ to $f$.

We define itineraries of elements of~$\hI$, lying in $\{0,1\}^\Z$,
in the same way as itineraries of points of~$I$
under~$f$: they provide symbolic representations of the points of~$\hI$
which are not directly related to the dynamics of $\hf$.
Thus if~$c$ is a periodic point of~$f$, then each~$\bx\in\hI$ has
a unique itinerary $J(\bx)$ defined by
\[
J(\bx)_r = 
\begin{cases}
0 & \text{ if }x_r \in [a,c),\\
1 & \text{ if }x_r \in (c,b],\\
\ve(f) & \text{ if }x_r = c
\end{cases}
\qquad\text{ for each }r\in\Z.
\]
On the other hand, if~$c$ is not a periodic point of~$f$, then we say
that $S\in\{0,1\}^\Z$ is an itinerary of~$\bx\in\hI$ if $x_r\in[a,c]$
whenever $S_r=0$, and $x_r\in[c,b]$ whenever $S_r=1$. Therefore $\bx$
has a unique itinerary if $x_r\not=c$ for all~$r$; and has exactly two
itineraries if $x_r=c$ for some~$r$, which are $\ldots\,
S_{r-2}\,S_{r-1}\,\opti\,\kappa(f)$. Note that if~$S$ is an itinerary
for $\bx\in\hI$ and $r\in\Z$, then $\fd{\sigma^r(S)}$ is an itinerary
for $x_r\in I$ under~$f$.

Let \[J_f = \{S\in\{0,1\}^\Z\,:\, S\text{ is an itinerary of some
}\bx\in\hI\}.\] An element of $J_f$ is said to be \emph{admissible (for
  $\hI$)}. If~$S$ is admissible, then it is the itinerary of only
one~$\bx\in\hI$, since each~$x_r$ is determined by its itinerary
$\fd{\sigma^r(S)}$. The map $g\colon J_f\to\hI$ which sends each
itinerary to the corresponding element of~$\hI$ is a semiconjugacy (at
most two-to-one) between the subshift $\sigma\colon J_f\to J_f$ and
the natural extension $\hf\colon\hI\to\hI$ of~$f$. For this reason we
refer to $\sigma\colon J_f\to J_f$ as the \emph{symbolic natural
  extension} of~$f$.

\begin{remark}
\label{rmk:range}
The condition that~$\lambda>\sqrt{2}$ is equivalent to the tent map
$f$'s not being renormalizable; which is equivalent in turn to the
condition $\kappa(f)\succ 101^\infty$.

The condition that~$\lambda<2$ is equivalent to $\kappa(f)\prec
10^\infty$. We exclude the case~$\lambda=2$ to avoid having to treat it
separately in lemma and theorem statements: since every element of
$\{0,1\}^\N$ (respectively~$\{0,1\}^\Z$) is admissible for~$f$
(respectively~$\hI$) when $\lambda=2$, there is no loss in so doing.
\end{remark}

\subsection{Admissibility conditions}

The following result, which gives conditions under which an element of
$\{0,1\}^\N$ is admissible for~$f$, is well known. We nevertheless
provide a proof (following those of~\cite{CE} and~\cite{Devaney}),
since it is a key result in the paper and our definition of
itineraries is slightly non-standard.

\begin{lem}[Admissibility conditions for~$f$]
\label{lem:admissible}
Write $\kappa(f)=\kappa$.  Let $s\in\{0,1\}^\N$. Then $s\in j_f$ if
and only if the following three conditions hold:
\begin{enumerate}[(a)]
\item $\sigma^r(s)\preceq\kappa$ for all $r\ge 0$;
\item $\sigma(\kappa) \preceq s$; and
\item if $c$ is periodic and $\sigma^r(s)=\kappa$ for some $r>0$, then
  $s_{r-1}=\ve(f)$. 
\end{enumerate}
\end{lem}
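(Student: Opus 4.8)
The plan is to prove necessity and sufficiency separately, using Remark~\ref{rmk:order} throughout as the bridge between the order on itineraries and the order on~$I$.

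For \emph{necessity}, suppose $s\in j_f$, say $s$ is an itinerary of $x\in I$. Then for each $r\ge 0$, $\sigma^r(s)$ is an itinerary of $f^r(x)$, and since $f^r(x)\le b$ and $\kappa=\kappa(f)=j(b)$ (or an itinerary of $b$ in the non-periodic case), Remark~\ref{rmk:order} gives $\sigma^r(s)\preceq\kappa$, which is~(a). For~(b), apply $f$ to get that $\sigma(\kappa)$ is an itinerary of $f(b)=a$; since $a$ is the minimum of~$I$ and any point's itinerary dominates that of~$a$, we get $\sigma(\kappa)\preceq s$ (using that there is only one itinerary of~$a$, as $a\ne c$). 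Condition~(c) is the subtle one: if $c$ is periodic of period~$n$ and $\sigma^r(s)=\kappa=(W\ve(f))^\infty$ with $r>0$, then $f^r(x)$ has itinerary $\kappa$, but the only point with itinerary $\kappa$ is~$b$ (distinct points have distinct itineraries), so $f^r(x)=b$, hence $f^{r-1}(x)=c$, and by our coding convention $s_{r-1}=j(x)_{r-1}=\ve(f)$.

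For \emph{sufficiency}, assume $s$ satisfies (a), (b), (c). The standard approach is to locate a point realising~$s$ by intersecting a nested sequence of closed intervals. For each $n\ge 0$ define
\[
I_n = \{x\in I : \text{for } 0\le r\le n,\ f^r(x)\in[a,c]\text{ if }s_r=0,\ f^r(x)\in[c,b]\text{ if }s_r=1\}.
\]
Each $I_n$ is a closed subinterval of~$I$ (the preimage under the monotone branch $f^r|$ of an appropriate closed subinterval, built inductively), and $I_{n+1}\subseteq I_n$. One must show each $I_n$ is nonempty: this is where conditions (a) and (b) enter, via the claim that $f^n(I_{n-1})$ always contains the relevant half $[a,c]$ or $[c,b]$ — the forward orbit of~$b$ (governed by~$\kappa$) controls how far $f^n(I_{n-1})$ extends, and (a) says $\sigma^r(s)\preceq\kappa$ prevents $s$ from demanding a point beyond~$b$, while (b) handles the lower constraint. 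Then any $x\in\bigcap_n I_n$ has $s$ as an itinerary, except that one must rule out the degenerate possibility that $x$ is forced to equal~$c$ at some stage with the wrong code; this is exactly what (c) rules out in the periodic case (in the non-periodic case both codes for~$c$ are permitted, so no issue arises).

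The main obstacle is the inductive nonemptiness argument for the $I_n$ — specifically, proving the claim that $f^n(I_{n-1})$ is a closed interval with the correct endpoint behaviour relative to~$c$ and~$b$. This requires a careful case analysis on whether $f^{n-1}(I_{n-1})$ abuts~$c$, tracking the parity of the word $s_0\ldots s_{n-1}$ (which determines orientation) and invoking (a) applied at the right shift to see that the new constraint $s_n$ is compatible; the periodic-critical-point bookkeeping, where $I_n$ can degenerate to the single point~$c$, is the fiddliest part and is precisely where condition~(c) must be used to guarantee the resulting itinerary is~$s$ rather than its twin.
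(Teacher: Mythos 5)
Your necessity argument is correct and essentially identical to the paper's (which also reduces (a) and (b) to the fact that $a$ and $b$ have itineraries $\sigma(\kappa)$ and $\kappa$, and (c) to $f^r(x)=b\implies f^{r-1}(x)=c$). For sufficiency you choose a different route from the paper: the paper defines the sets $L$ and $R$ of points all of whose itineraries are $\prec s$ (resp.\ $\succ s$) and shows they are open, so that connectedness of $I$ yields a point realising $s$; you instead propose the classical nested-cylinder construction. That strategy can be made to work, but as written it has a genuine gap precisely at the step you yourself flag as "the main obstacle."

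Concretely: the claim that $f^n(I_{n-1})$ "always contains the relevant half $[a,c]$ or $[c,b]$" is false in general for $\lambda<2$. The set $f^n(I_{n-1})$ is an interval whose endpoints lie on $\orb(b,f)$ (the endpoints of $I_{n-1}$ are either $a$, $b$, or preimages of $c$, and $f(c)=b$), and it can easily be a short interval lying entirely on one side of $c$; when that happens the value of $s_n$ is forced, and the whole content of the lemma is that conditions (a) and (b) force it correctly. Proving this requires comparing $s$ with the itineraries of the endpoints of $f^n(I_{n-1})$, which are shifts of $\kappa$ and $\sigma(\kappa)$ -- an argument with its own parity bookkeeping that you have not supplied. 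A second unresolved point is the periodic case: your $I_n$ are defined with the closed halves $[a,c]$ and $[c,b]$, so a point $x\in\bigcap_n I_n$ only shows that $s$ codes $x$ \emph{weakly}; since in the periodic case $j(x)$ is required to use the symbol $\ve(f)$ at every visit to $c$, you still must show that if $s$ disagrees with $j(x)$ at some $c$-visit then one of (a), (b), (c) fails (note that (c) applies only when $\sigma^r(s)$ equals $\kappa$ \emph{exactly}, which need not hold if $s$ miscodes infinitely many visits to $c$). Both of these are exactly the case analyses you defer, so the proposal is an outline of a viable alternative proof rather than a proof.
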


\begin{proof}
Let~$s$ be an itinerary of $x\in I$. Since $a$ and~$b$ have unique
itineraries $\sigma(\kappa)$ and $\kappa$, $s$ must satisfy~(a)
and~(b) by Remark~\ref{rmk:order}. Moreover, if $c$ is periodic and
$r>0$, then 
$\sigma^r(s)=\kappa \implies \mbox{$f^r(x)=b$}\implies f^{r-1}(x)=c\implies
s_{r-1}=\ve(f)$, so that~(c) holds too.

For the converse, let $s\in\{0,1\}^\N$ satisfy~(a), (b), and (c). We
shall show that $s$ is an itinerary of some~$x\in I$. We can suppose
that $\sigma(\kappa)\prec s\prec \kappa$, since otherwise~$s$ is the
itinerary of either~$a$ or~$b$.

Suppose first that~$c$ is not periodic. Define
\begin{eqnarray*}
L &=& \{x\in I\,:\, \text{ all itineraries $t$ of $x$ have }t\prec
s\},\\
R &=& \{x\in I\,:\, \text{ all itineraries $t$ of $x$ have }t\succ
s\}.
\end{eqnarray*}
Then $a\in L$ and $b\in R$. We shall show that $L$ and $R$ are open
in~$I$, so that there is some $x\not\in L\cup R$. It is impossible
for such a point~$x$ to have one itinerary smaller than~$s$ and one larger
than~$s$, since if~$s$ lies strictly between $t_0\,\ldots\,t_{r-1}0
\kappa$ and $t_0\,\ldots\,t_{r-1}1\kappa$ then
$\sigma^{r+1}(s)\succ\kappa$, contradicting~(a). Therefore $s$ is an
itinerary of~$x$, as required.

Let $x\in L$. We need to show that if $y>x$ is sufficiently close to~$x$, then
any itinerary of~$y$ is smaller than~$s$. If $c\not\in\orb(x,f)$ then this is
obvious. If $c=f^r(x)$ for some (unique)~$r\ge 0$ then~$x$ has itineraries
$t_0\,\ldots\,t_{r-1}\opti\kappa$. We can suppose that $s_i=t_i$ for $0\le i\le
r-1$, since otherwise the result is obvious. If $t_0\,\ldots\,t_{r-1}$ is even
then, since both of the itineraries of~$x$ are smaller than~$s$, we have
$s=t_0\,\ldots\,t_{r-1}1u$ for some $u\in\{0,1\}^\N$ with $u\prec\kappa$.
Let~$j\ge 0$ be least with $u_j\not=\kappa_j$. Pick~$z>x$ sufficiently close
to~$x$ that if $y\in(x,z)$ then $f^i(y)\not=c$ for $0\le i\le r+j+2$. Then any
$y\in(x,z)$ has all itineraries of the form $t_0\,\ldots\,t_{r-1}1
\kappa_0\ldots \kappa_j\ldots$, and the result follows. The argument is
analogous if $t_0\,\ldots\,t_{r-1}$ is odd.

The proof that $R$ is open in~$I$ is similar.

\medskip\medskip

Now suppose that~$c$ is periodic of period~$n$. Write
  $\ve=\ve(f)$ and $\bve = 1-\ve(f)$. Let $W\in\{0,1\}^{n-1}$ be such
  that $\kappa = (W\ve)^\infty$. Define
\begin{eqnarray*}
L &=& \{x\in I\,:\, j(x) \prec s\},\\
R &=& \{x\in I\,:\, j(x) \succ s\}.
\end{eqnarray*}
Since $a\in L$ and $b\in R$, it suffices to show that $L$ and $R$ are
open in~$I$.

Let $x\in L$. We need to show that, for all $y>x$ sufficiently close
to~$x$, we have $j(y)\prec s$. If $c\not\in\orb(x,f)$ then this is
obvious, so we suppose that there is some least~$r\ge0$ with
$f^r(x)=c$. Therefore
\[
t := j(x) = t_0\,\ldots\,t_{r-1}\,\ve\,(W\ve)^\infty.
\]
We can suppose that $s_i=t_i$ for $0\le i\le r-1$, since otherwise the
result is obvious. We distinguish two cases:

\medskip\noindent\textbf{Case A: }$t_0\,\ldots\,t_{r-1}\,\ve$ is an
even word. Since $t\prec s$ and $\sigma^{r+1}(s)\preceq(W\ve)^\infty$ by~(a) we
have that $ s = t_0\,\ldots\,t_{r-1}\,\bve\,u $ for some $u\in\{0,1\}^\N$,
which satisfies $u\prec(W\ve)^\infty$ by~(c). Write $u = (W\ve)^k\,v$ for $k\ge
0$ as large as possible, so that $v\in\{0,1\}^\N$ satisfies
$v\prec(W\ve)^\infty$ and does not have $W\ve$ as an initial subword. In
particular, $v\prec w$ for any $w\in\{0,1\}^\N$ which does start with~$W\ve$.
We have $s = t_0\,\ldots\,t_{r-1}\,\bve\,(W\ve)^k\,v$.

Pick $z>x$ sufficiently close to~$x$ that if $y\in (x,z)$ then
$f^i(y)\not=c$ for $0\le i\le (k+1)n+r+1$. Then
$j(y)=t_0\,\ldots\,t_{r-1}\,\bve\,(W\ve)^{k+1}\,\ldots$ for all
$y\in(x,z)$. This is because $f^r(y)$ is less than~$c$ (respectively
greater than~$c$) if $t_0\,\ldots\,t_{r-1}$ is odd (respectively
even); and \mbox{$f^{r+1}(y)<b$}. Since
$t_0\,\ldots\,t_{r-1}\,\bve\,(W\ve)^k$ is an odd word, it follows that
$j(y)\prec s$ for all such~$y$, as required.

\medskip\noindent\textbf{Case B: }$t_0\,\ldots\,t_{r-1}\,\ve$ is an
odd word. Since $t\prec s$ we have $s=t_0\,\ldots\,t_{r-1}\,\ve\,u$ for some
$u\in\{0,1\}^\N$ with $u\prec(W\ve)^\infty$ (this last statement again since
$t\prec s$). Write $u=(W\ve)^k\,v$ for $k\ge 0$ as large as possible: then $s =
t_0\,\ldots\,t_{r-1}\,\ve\,(W\ve)^k\,v$, where $v\prec(W\ve)^\infty$ does not
have $W\ve$ as an initial subword.

As in Case~A, if $y>x$ is close enough to~$x$ then
$j(y)=t_0\,\ldots\,t_{r-1}\,\ve\,(W\ve)^{k+1}\ldots$. This is
because $f^r(y)$ is less than~$c$ (respectively greater than~$c$) if
$t_0\,\ldots\,t_{r-1}$ is odd (respectively even). Since
$t_0\,\ldots\,t_{r-1}\,\ve\,(W\ve)^k$ is an odd word, the result follows.

\medskip

The proof that~$R$ is open in~$I$ is analogous: in this case, the
argument when $t_0\,\ldots\, t_{r-1}\,\ve$ is even is similar to
case~B, while the argument when $t_0\,\ldots\, t_{r-1}\,\ve$ is odd is
similar to case~A.
\end{proof}

The following straightforward lemma enables us to convert
these conditions into admissibility conditions for~$\hI$.

\begin{lem}
\label{lem:admiss-equiv}
Let $S\in\{0,1\}^\Z$. Then $S\in J_f$ if and only if $\fd{\sigma^r(S)}\in
j_f$ for all $r\in\Z$.
\end{lem}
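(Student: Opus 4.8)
The plan is to prove both implications directly from the definitions of $J_f$ and $j_f$, using the observation (already noted in the text) that if $S$ is an itinerary of $\mathbf{x}\in\hI$ then $\fd{\sigma^r(S)}$ is an itinerary of $x_r\in I$ under $f$. For the forward implication, suppose $S\in J_f$, so $S$ is the itinerary of some $\mathbf{x}\in\hI$. Then for each $r\in\Z$, $\fd{\sigma^r(S)}$ is an itinerary of $x_r\in I$, hence $\fd{\sigma^r(S)}\in j_f$. (One should also handle the periodic-$c$ case, where $J(\mathbf{x})$ and $j(x_r)$ are the unique itineraries and the same coding convention $\ve(f)$ is used at $c$, so the statement $\fd{\sigma^r(J(\mathbf{x}))} = j(x_r)$ is immediate from the definitions.)

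For the converse, suppose $\fd{\sigma^r(S)}\in j_f$ for every $r\in\Z$. We must produce $\mathbf{x}\in\hI$ having $S$ as an itinerary. The natural move is: for each $r\in\Z$, let $x_r\in I$ be a point with itinerary $\fd{\sigma^r(S)}$. The content is to check that (i) $x_r$ is well-defined, i.e.\ uniquely determined, and (ii) $f(x_r) = x_{r+1}$, so that $\mathbf{x}=(x_r)_{r\in\Z}\in\hI$, and (iii) $\mathbf{x}$ really has $S$ as an itinerary. For (i): by Remark~\ref{rmk:order}, distinct points of $I$ have distinct itineraries, so the point with a given admissible itinerary is unique — except in the non-periodic case when $c\in\orb(x,f)$, where two itineraries code the single point $x$; but then those two itineraries are $s_0\ldots s_{k-1}\opti\kappa(f)$, which are \emph{not} of the form $\fd{\sigma^r(S)}$ for a single bi-infinite $S$ unless they agree, so $x_r$ is still well-defined as the unique point whose itinerary is exactly $\fd{\sigma^r(S)}$. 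For (ii): $\sigma(\fd{\sigma^r(S)}) = \fd{\sigma^{r+1}(S)}$, and by Remark~\ref{rmk:order}, $\sigma$ applied to an itinerary of $x_r$ gives an itinerary of $f(x_r)$; since $\fd{\sigma^{r+1}(S)}$ is also the itinerary of $x_{r+1}$, and itineraries determine points (with the caveat above), we get $f(x_r)=x_{r+1}$. For (iii): $x_r\in[a,c]$ when $S_r=0$ and $x_r\in[c,b]$ when $S_r=1$, directly from $x_r$ having itinerary $\fd{\sigma^r(S)}$ whose zeroth entry is $S_r$ — this is exactly the defining condition for $S$ to be an itinerary of $\mathbf{x}$.

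The main obstacle is the bookkeeping around the critical point: making sure that "the point with itinerary $\fd{\sigma^r(S)}$" is genuinely unambiguous, and that the candidate sequence $(x_r)$ is consistent under $f$. In the periodic-$c$ case this is cleanest, since each point of $I$ has a unique itinerary and so does each point of $\hI$; the only subtlety is that an admissible $s\in j_f$ with $\sigma^k(s)=\kappa(f)$ forces $s_{k-1}=\ve(f)$, but this is already built into condition~(c) of Lemma~\ref{lem:admissible} and hence into the hypothesis $\fd{\sigma^r(S)}\in j_f$. In the non-periodic case, the potential annoyance is a point $x_r$ equal to $c$: then $S_r$ could be $0$ or $1$ and both choices give admissible forward itineraries, but for a \emph{fixed} $S$ the value $S_r$ is pinned down, and since $f(c)=b$ has the unique itinerary $\kappa(f)$, the tail $\fd{\sigma^{r+1}(S)}$ must equal $\kappa(f)$ — consistency is automatic. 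I would present this as a short clean argument in the periodic case and note that the non-periodic case is identical with "the unique itinerary" replaced by "an itinerary", the only extra observation being the one just made about $c$ in the orbit.
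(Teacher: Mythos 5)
Your proof is correct and follows essentially the same route as the paper's: the forward direction is the observation that $\fd{\sigma^r(S)}$ is an itinerary of $x_r$, and the converse takes the unique point $x_r$ with itinerary $\fd{\sigma^r(S)}$ (uniqueness by Remark~\ref{rmk:order}) and uses the fact that $\sigma$ of an itinerary of $x$ is an itinerary of $f(x)$ to get $f(x_r)=x_{r+1}$. The extra bookkeeping you supply around the critical point is harmless but not needed beyond what the paper already records.
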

\begin{proof}
If $S\in J_f$, then $S$ is an itinerary of some $\bx\in\hI$. For each
$r\in\Z$, $\fd{\sigma^r(S)}$ is an itinerary of $x_r\in I$, and hence
lies in $j_f$.

Conversely, suppose that $\fd{\sigma^r(S)}\in j_f$ for all
$r\in\Z$. Then for each~$r\in\Z$ there is a unique $x_r\in I$ which
has~$\fd{\sigma^r(S)}$ as an itinerary. Now if $s\in\{0,1\}^\N$ is an
itinerary for~$x\in I$, then $\sigma(s)$ is an itinerary for~$f(x)$:
hence $f(x_r)=x_{r+1}$ for each~$r$. Therefore $\bx=(x_r)$ is an
element of $\hI$ with itinerary $S$.
\end{proof}

\begin{lem}[Forward admissibility conditions for $\hI$]
\label{lem:admiss-invlim-forward}
Write $\kappa(f)=\kappa$. Let $S\in\{0,1\}^\Z$. Then $S\in J_f$ if and
only if the following three conditions hold:
\begin{enumerate}[(A)]
\item $\fd{\sigma^r(S)}\preceq\kappa$ for all $r\in\Z$;
\item $S$ does not start $0^\infty$; and
\item If $c$ is periodic and $\fd{\sigma^r(S)}=\kappa$ for some
  $r\in\Z$, then $S_{r-1}=\ve(f)$.
\end{enumerate}
\end{lem}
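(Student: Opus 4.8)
The plan is to deduce this from Lemmas~\ref{lem:admissible} and~\ref{lem:admiss-equiv}. By Lemma~\ref{lem:admiss-equiv}, $S\in J_f$ if and only if $\fd{\sigma^r(S)}\in j_f$ for every $r\in\Z$, and by Lemma~\ref{lem:admissible} this holds if and only if each $\fd{\sigma^r(S)}$ satisfies conditions~(a), (b), and~(c) of that lemma. So the work is purely translational: I must show that the conjunction over all $r\in\Z$ of ``$\fd{\sigma^r(S)}$ satisfies (a)--(c)'' is equivalent to the conjunction of~(A), (B), and~(C) here. I would handle the three conditions in turn.

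For~(A): condition~(a) applied to $\fd{\sigma^r(S)}$ says $\sigma^k(\fd{\sigma^r(S)})\preceq\kappa$ for all $k\ge 0$, i.e.\ $\fd{\sigma^{r+k}(S)}\preceq\kappa$. Letting $r$ range over $\Z$ and $k$ over $\N$, the index $r+k$ ranges over all of $\Z$, so the family of conditions (a) for all $r\in\Z$ is exactly the single family~(A). For~(C): condition~(c) applied to $\fd{\sigma^r(S)}$ says that if $c$ is periodic and $\sigma^k(\fd{\sigma^r(S)})=\kappa$ for some $k>0$ then the $(k-1)$st entry of $\fd{\sigma^r(S)}$ equals $\ve(f)$; since $\sigma^k(\fd{\sigma^r(S)})=\fd{\sigma^{r+k}(S)}$ and the $(k-1)$st entry of $\fd{\sigma^r(S)}$ is $S_{r+k-1}$, this reads: if $\fd{\sigma^{m}(S)}=\kappa$ for some $m=r+k$ with $k>0$, then $S_{m-1}=\ve(f)$. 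The constraint $k>0$ is harmless once we quantify over all $r\in\Z$, because any $m\in\Z$ with $\fd{\sigma^m(S)}=\kappa$ arises as $m=r+k$ with $k>0$ for suitable $r$ (e.g.\ $r=m-1$, $k=1$); so the family of conditions~(c) is exactly~(C).

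Condition~(B) requires slightly more care, and I expect it to be the only point needing genuine argument rather than bookkeeping. Condition~(b) applied to $\fd{\sigma^r(S)}$ says $\sigma(\kappa)\preceq\fd{\sigma^r(S)}$ for every $r\in\Z$. I claim this family of conditions is equivalent to~(B), that $S$ does not start $0^\infty$ (i.e.\ infinitely many entries of $\bd{S}$ are~$1$, equivalently $S_r=1$ for infinitely many $r<0$). If $S$ does start $0^\infty$, then there is $r<0$ with $S_m=0$ for all $m\le r$, so $\fd{\sigma^r(S)}=0^\infty$ (if literally all later-negative entries vanish) or at any rate $\fd{\sigma^r(S)}=0^k\ldots$; more carefully, if $S_m=0$ for all $m\le r$ then $\fd{\sigma^r(S)}$ begins with an arbitrarily long block of $0$s as $r\to-\infty$, and $0^\infty\prec\sigma(\kappa)$ while any sequence beginning $0^k$ with $k$ large is $\preceq 0^{k-1}1\ldots$; since $\sigma(\kappa)\succ 0^\infty$, $\sigma(\kappa)$ has some first $1$, say at position $p$, and then $\fd{\sigma^r(S)}$ with more than $p$ leading zeros satisfies $\fd{\sigma^r(S)}\prec\sigma(\kappa)$, violating~(b). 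Conversely, if $S$ does not start $0^\infty$, then for every $r\in\Z$ there are infinitely many $1$s among $S_{r}, S_{r+1},\ldots$? That is not quite what~(B) gives directly — (B) concerns negative indices — so instead I argue: since $\sigma(\kappa)$ is the itinerary of~$a$, the smallest point of~$I$, and since (by the already-verified condition~(A)) each $\fd{\sigma^r(S)}\preceq\kappa$ is admissible-below, I would instead verify directly that $\sigma(\kappa)\preceq\fd{\sigma^r(S)}$ using that $a$ has itinerary $\sigma(\kappa)$ together with the structure $\kappa=(W\ve)^\infty$ or $\kappa\in(101^\infty,10^\infty)$; concretely, $\sigma(\kappa)$ begins $0\ldots$ and the point is that $\sigma(\kappa)$ is the $\preceq$-least itinerary, so $\fd{\sigma^r(S)}\prec\sigma(\kappa)$ would force $\fd{\sigma^r(S)}$ to begin with a longer run of $0$s than $\sigma(\kappa)$ does, which (chasing the run of $0$s backwards through $S$) forces $S_m=0$ for all sufficiently negative $m$, contradicting~(B). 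The cleanest writeup is to pin down the prefix of $\sigma(\kappa)$ (it has the form $0^{q}1\ldots$ for some $q\ge 1$ with $q$ determined by $\kappa$), observe that $\fd{\sigma^r(S)}\succeq\sigma(\kappa)$ for all $r$ fails iff some $\fd{\sigma^r(S)}$ starts with $0^{q+1}$, which in turn (since this must then also hold for $\fd{\sigma^{r+1}(S)},\ldots$ down to where the block of zeros ends) is equivalent to $S$ having a tail of zeros in the negative direction. I would write out this equivalence carefully, since it is the one place where the forward-to-backward (or rather, the ``for all shifts'') quantification interacts nontrivially with the $0^\infty$ convention; everything else is immediate.
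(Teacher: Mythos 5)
Your overall strategy is the paper's: reduce via Lemma~\ref{lem:admiss-equiv} to checking that every $\fd{\sigma^r(S)}$ satisfies (a)--(c) of Lemma~\ref{lem:admissible}, and then match these families of conditions against (A)--(C). Your treatment of (A) and (C) is correct and agrees with the paper (which indeed dismisses them as immediate). The gap is exactly where you anticipated trouble, in condition (B), and it is a genuine one: the equivalence you set out to prove --- that ``$\sigma(\kappa)\preceq\fd{\sigma^r(S)}$ for all $r\in\Z$'' is equivalent to (B) on its own --- is false. Take $\kappa=10^k1\ldots$ (so $\sigma(\kappa)=0^k1\ldots$) and $S$ with $\bd{S}=1^\infty$ and $\fd{S}=0^{k+1}1^\infty$. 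Then $S$ satisfies (B), yet $\fd{S}=0^{k+1}\ldots\prec 0^k1\ldots=\sigma(\kappa)$, so (b) fails at $r=0$. (This $S$ also violates (A), which is consistent with the lemma; the point is that condition (A) is indispensable in deriving (b), and your argument never actually uses it.) The specific false step is your claim that $\fd{\sigma^r(S)}\prec\sigma(\kappa)$ forces $\fd{\sigma^r(S)}$ to begin with a longer run of $0$s than $\sigma(\kappa)$: two sequences can share the prefix $0^k1$ and have their first disagreement much later, with the parity working out so that the second is still smaller. Consequently the chain of ``iff''s in your final sentence (failure of (b) iff some shift starts with $0^{q+1}$ iff $S$ has a negative tail of zeros) breaks at both links; even a single block of $q+1$ zeros somewhere in $S$, preceded and followed by $1$s, gives a shift starting with $0^{q+1}$ without $S$ starting $0^\infty$.

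The correct implication is ``(A) and (B) together imply (b) for all $r$,'' and the paper's proof of it is short: if $\fd{\sigma^r(S)}\prec\sigma(\kappa)$, use (B) to find the greatest $i<r$ with $S_i=1$; then $\fd{\sigma^i(S)}=10^{r-i-1}\fd{\sigma^r(S)}$, and since $\kappa=1\,\sigma(\kappa)$ and the leading $1$ reverses the order, this sequence is $\succ\kappa$, contradicting (A). (A small unimodal-order computation, using $\sigma(\kappa)=0^k1\ldots$ with $k\ge1$, shows that prepending the zeros does not spoil the inequality.) So you should replace your ``chase the run of zeros backwards'' argument --- which cannot be made to work as stated, since there need be no long run of zeros at all --- by an appeal to (A) at a single well-chosen shift. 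The converse direction of your (B) discussion (if $S$ starts $0^\infty$ then some shift violates (b)) is fine and matches the paper.
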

\begin{proof}
Suppose that conditions (A), (B), and (C) hold. Let~$r\in\Z$. By
Lemma~\ref{lem:admiss-equiv}, we need to show that $\fd{\sigma^r(S)}$
satisfies the conditions of Lemma~\ref{lem:admissible}. Conditions~(a)
and~(c) of Lemma~\ref{lem:admissible} are immediate from~(A)
and~(C). For~(b), suppose for a contradiction that
$\fd{\sigma^r(S)}\prec\sigma(\kappa)$. By~(B), there is some
greatest~$i<r$ with $S_i=1$. Then
$\fd{\sigma^i(S)}=10^{r-i-1}\fd{\sigma^r(S)}\succ\kappa$, which
contradicts~(A).

For the converse, suppose that $\fd{\sigma^r(S)}$ satisfies the
conditions of Lemma~\ref{lem:admissible} for each~$r\in\Z$. We need to
show that~$S$ satisfies conditions (A), (B), and
(C). Conditions~(A) and~(C) are immediate from~(a) and~(c) of
Lemma~\ref{lem:admissible}. For~(B), suppose for a contradiction that
there is some~$R\in\Z$ such that $S_r=0$ for all $r\le R$. Since
$\kappa\prec 10^\infty$, there is some $k>0$ with
$\kappa=10^k1\ldots$. Then $\fd{\sigma^{R-k}(S)} = 0^{k+1}\ldots \prec
\sigma(\kappa)$, contradicting~(b) of Lemma~\ref{lem:admissible}.  
\end{proof}

\section{Backward admissibility}
\label{sec:backward-admissibility}
\subsection{Height}
\label{sec:height}
In order to establish backward admissibility conditions we will use
the \emph{height function} $q\colon\{0,1\}^\N\to[0,1/2]$ introduced
in~\cite{HS}. Here we recall the definition of this function, and
state those of its properties which we will use.

\begin{conv}
All rationals~$m/n$ will be assumed to be written in lowest terms.
\end{conv}

Let $q\in(0,1/2]$. We associate to~$q$ a sequence $(k_i(q))_{i\ge 1}$
  in~$\N$ as follows. Let $L_q$ be the straight line $y=qx$ in $\R^2$. For each~$i\ge
  1$, define $k_i(q)$ to be two less than the number of vertical
  lines $x=\text{integer}$ which $L_q$ intersects for $y\in[i-1, i]$.

If $q=m/n$ is rational, then define the word~$c_q\in\{0,1\}^{n+1}$ by
\[
c_q = 10^{k_1(q)}110^{k_2(q)}11 \ldots 110^{k_m(q)}1.
\]
On the other hand, if $q$ is irrational,
then let $c_q=10^{k_1(q)}110^{k_2(q)}11 \ldots \in\{0,1\}^\N$. (These
sequences $c_q$ are closely related to Sturmian sequences of
slope~$q$, which can also be defined as cutting sequences of~$L_q$,
see for example~\cite{Arnoux}.)

\begin{example}
Figure~\ref{fig:cq} shows the line $L_{5/17}$ for $x\in[0,17]$. The
numbers of intersections with vertical coordinate lines for
$y\in[i-1,i]$ are $4$, $3$, $4$, $3$, and $4$ for $i=1$, $i=2$, $i=3$,
$i=4$, and $i=5$ respectively. Hence
$k_1(5/17)=k_3(5/17)=k_5(5/17)=2$, while
$k_2(5/17)=k_4(5/17)=1$. Therefore
$c_{5/17}=100110110011011001$, a word of length~18.

More generally, if $q=m/n$ then the word~$c_q$ is evidently
palindromic, and contains $n-2m+1$ zeroes divided `as even-handedly as
possible' into $m$ (possibly empty) subwords, separated by~$11$. For
example, for each~$n\ge 2$ we have $c_{1/n} = 10^{n-1}1$;
$c_{2/(2n+1)} = 10^{n-1}110^{n-1}1$;
$c_{3/(3n+1)}=10^{n-1}110^{n-2}110^{n-1}1$; and
$c_{3/(3n+2)}=10^{n-1}110^{n-1}110^{n-1}1$.
\end{example}

\begin{figure}[htbp]
\begin{center}
\includegraphics[width=0.6\textwidth]{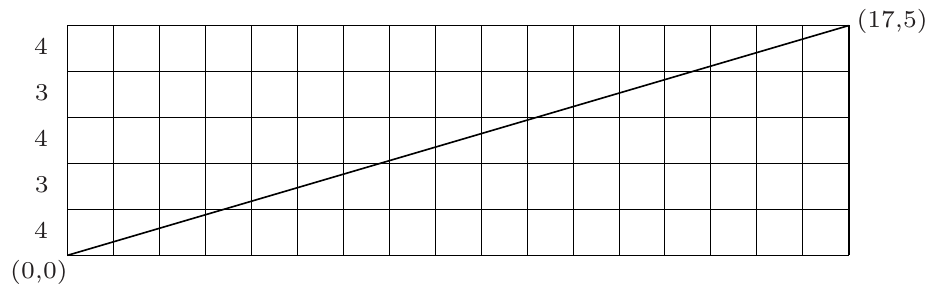}
\end{center}
\caption{$c_{5/17}=100110110011011001$}
\label{fig:cq}
\end{figure}

The following statement, which is Lemma~2.7 of~\cite{HS}, is essential
for the definition of height.

\begin{lem}
\label{lem:cq1-maximal}
 The function $(0,1/2]\cap\Q\to\{0,1\}^\N$
    defined by $q\mapsto (c_q0)^\infty$ is strictly decreasing with
    respect to the unimodal order on~$\{0,1\}^\N$. \qed
\end{lem}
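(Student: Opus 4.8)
The plan is to give a direct combinatorial proof by comparing, for two rationals $p/q < p'/q'$ in $(0,1/2]$, the sequences $(c_{p/q}0)^\infty$ and $(c_{p'/q'}0)^\infty$ under the unimodal order; I expect it suffices to treat the case of two \emph{Farey neighbours}, since the general case follows by transitivity once the Stern--Brocot/Farey structure of $(0,1/2]\cap\Q$ is used to interpolate. First I would record the basic structure of~$c_q$ from the example discussion: $c_q = 10^{k_1}110^{k_2}11\ldots 110^{k_m}1$ is an even word of length $n+1$ (it contains $2m$ ones), it is a palindrome, and the block lengths $k_i(q)$ are the cutting-sequence data of the line $L_q$, so that as $q$ increases the $k_i$ can only decrease (weakly, entry by entry in a controlled Sturmian fashion). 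Then I would compute, for the candidate least index~$r$ at which $(c_{q_1}0)^\infty$ and $(c_{q_2}0)^\infty$ first differ, the parity of the common initial segment and deduce the direction of the inequality.

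The key step is the following local claim: if $m/n < m'/n'$ are Farey neighbours with, say, $m'/n'$ the "mediant-side" of the step, then $c_{m/n}$ is a proper prefix of $c_{m'/n'}$ up to the point of first difference, and at the first index $r$ where they differ one has $c_{m/n}$ reading a~$1$ where $c_{m'/n'}$ reads a~$0$ (or the appropriate dual statement), while the initial word $c_{m/n}|_{[0,r-1]}$ is \emph{even}. Granting this, $(c_{m/n}0)^\infty$ and $(c_{m'/n'}0)^\infty$ first differ at index~$r$ with the smaller sequence having a~$1$ after an even word, so by the definition of~$\preceq$ the sequence with the larger parameter is strictly smaller: $(c_{m'/n'}0)^\infty \prec (c_{m/n}0)^\infty$, which is exactly "strictly decreasing". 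The parity bookkeeping is clean because each block $11$ contributes an even number of ones and each maximal $0$-run contributes none, so the parity of a prefix of~$c_q$ is governed entirely by whether it ends in the middle of a $11$ block; combined with the trailing $0$ in $(c_q0)^\infty$ this pins down the sign.

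The main obstacle will be handling the first-difference index cleanly across all cases of how the two $c_q$'s relate — in particular when one parameter is irrational-adjacent or when the period lengths are coprime so the two periodic sequences $(c_{q_1}0)^\infty$, $(c_{q_2}0)^\infty$ interleave in a complicated way rather than one word being a literal prefix of the other. I would manage this by not comparing $c_{q_1}$ with $c_{q_2}$ directly but instead comparing $(c_{q_1}0)^\infty$ with $(c_{q_2}0)^\infty$ entry by entry and using the Sturmian/cutting-sequence description: both are (essentially) the cutting sequences of the lines $L_{q_1}$, $L_{q_2}$ with the ones doubled and a~$0$ inserted per period, so the index of first disagreement corresponds to the first lattice square in which $L_{q_1}$ and $L_{q_2}$ behave differently, and the geometry of two lines through the origin forces $L_{q_2}$ (the steeper one) to pick up its extra horizontal crossing — equivalently its extra~$0$ — exactly there. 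Translating that geometric first-crossing statement into the parity assertion above is the technical heart; everything else is the routine verification that the resulting inequality is strict and that transitivity along Farey paths upgrades the neighbour case to the full monotonicity statement.
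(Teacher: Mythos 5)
First, a point of comparison: the paper does not prove this lemma at all --- it is imported verbatim as Lemma~2.7 of~\cite{HS}, which is why the statement itself carries a proof box. So there is no in-paper argument to measure yours against, and your proposal has to stand on its own as a proof. Judged that way, it is a plausible plan rather than a proof, because its entire content is concentrated in the ``key local claim'', which you state only in a hedged form (``\ldots or the appropriate dual statement'') and do not establish; you yourself identify the translation from the cutting-sequence geometry to the parity assertion as ``the technical heart''. The framing is sound --- the reduction to Farey neighbours by transitivity works (any two rationals in $(0,1/2]$ are joined by a monotone chain of consecutive elements of a Farey sequence $F_N$), and comparing the two periodic sequences at the first index of disagreement and checking the parity of the common prefix is the right mechanism --- but the claim that needs proving is exactly the correlation between that parity and the symbol read there.

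Concretely, both alternatives of your claim really occur, so the sign does not come for free from parity bookkeeping alone: for $1/3<1/2$ one has $(c_{1/3}0)^\infty=(10010)^\infty$ and $(c_{1/2}0)^\infty=(10)^\infty$, which first disagree after the \emph{odd} prefix $10$ with the smaller parameter reading~$0$; for $2/5<1/2$ one has $(c_{2/5}0)^\infty=(1011010)^\infty$, which first disagrees with $(10)^\infty$ after the \emph{even} prefix $101$ with the smaller parameter reading~$1$. In each case the conclusion is the same ($q$ larger gives the $\prec$-smaller sequence), but you must prove that the parity of the common prefix and the identity of the sequence carrying the extra~$0$ are always correlated in the one way that yields this conclusion; the observation that the parity of a prefix of $c_q$ is governed by whether it ends inside a block $11$ is true but does not decide this by itself. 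A second unaddressed issue is that the symbol~$0$ appended at the end of each period of $(c_q0)^\infty$ is \emph{not} a cutting-sequence symbol of $L_q$, so when the first disagreement falls at or beyond position $n+1$ of the shorter word, the ``first lattice square where the two lines differ'' picture breaks down and a separate argument is needed (this is precisely the regime where statements like Lemma~\ref{lem:height}(d) and~(e) do real work elsewhere in the paper). Until the local claim and this boundary case are actually proved, the argument is incomplete.
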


We now define the \emph{height} $q(s)\in[0,1/2]$ of $s\in\{0,1\}^\N$ by
\[
q(s) = \inf\left(\{q\in(0,1/2]\cap\Q\,:\, (c_q0)^\infty \prec s\}\cup\{1/2\}\right).
\]

By Lemma~\ref{lem:cq1-maximal}, the height function
$q\colon\{0,1\}^\N\to[0,1/2]$ is decreasing with respect to the unimodal
order on~$\{0,1\}^\N$ and the usual order on~$[0,1/2]$.

In order to state the properties of height which we require, we need
some additional notation. For each rational~$q=m/n\in(0,1/2)$, let
$w_q\in\{0,1\}^{n-1}$ be defined by $(w_q)_i = (c_q)_i$ for $0\le i\le
n-2$; and let $\hw_q$ be the reverse of $w_q$, so that
$(\hw_q)_i=(w_q)_{n-2-i}$ for $0\le i\le n-2$. We therefore have
$c_q=w_q01$ for each~$q$. Since $c_q$ is an even word, $w_q$ and
$\hw_q$ are odd words.

Define $\lhe(q)$ and $\rhe(q)$ to be the shift-maximal elements of
$\{0,1\}^\N$ given by
\[\lhe(q)=(w_q1)^\infty \qquad\text{ and } \qquad \rhe(q)=c_q(1\hw_q)^\infty.\]

The first three statements of the following lemma characterize those elements
of $\{0,1\}^\N$ which have given height. Most significant, from the point of
view of this paper, is that irrational heights are realised by single elements
of $\{0,1\}^\N$, while rational heights~$q$ other than~0 are realised on
intervals in $\{0,1\}^\N$ with left and right hand endpoints $\lhe(q)$ and
$\rhe(q)$. Note (cf.\ Remark~\ref{rmk:range}) that, by~(a), the condition
$\lambda\in(\sqrt2,2)$ is equivalent to $q(\kappa(f))\in(0,1/2)$.

\begin{lem}[Properties of height] \mbox{}
\label{lem:height}
\begin{enumerate}[(a)]
\item Let $s\in\{0,1\}^\N$. Then $q(s)=0$ if and only if
  $s=10^\infty$; and $q(s)=1/2$ if and only if $s\preceq 101^\infty$.
\item Let $s\in\{0,1\}^\N$ and $q\in(0,1/2)$ be rational. Then
  $q(s)=q$ if and only if $\lhe(q)\preceq s\preceq \rhe(q)$.
\item Let $s\in\{0,1\}^\N$ and $q\in(0,1/2)$ be irrational. Then
  $q(s)=q$ if and only if $s=c_q$.
\item Let $q=m/n\in(0,1/2)$, and $1\le r\le m$. Then the word
  $10^{k_r(q)+1} 110^{k_{r+1}(q)}11\ldots 11 0^{k_m(q)}1$ disagrees
  with $c_q$ within the shorter of their lengths, and is greater than
  it in the unimodal order.
\item Let $q\in(0,1/2)$ be rational, and let
  $s=c_q\ldots\in\{0,1\}^\N$. Then $q(s)\le q$.
\item Let $s\in\{0,1\}^\N$ with $q=q(s)\in(0,1/2)$ rational. Then
  either $s=\lhe(q)$, or there is some $k\ge 0$ and $t\in\{0,1\}^\N$
  such that $s=(w_q1)^k\,t$, and either $t=c_q\ldots$ or $q(t)<q$.
\item Let $\kappa$ be the kneading sequence of a tent map, with
  $q=q(\kappa)$ rational. Then either $\kappa=\lhe(q)$, or $\kappa =
  c_q\ldots$.
\end{enumerate}
\end{lem}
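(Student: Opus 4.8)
The plan is to feed~$\kappa$ into part~(f) and then invoke the shift-maximality of kneading sequences to exclude both of the alternatives that~(f) leaves open. First I would observe that $q=q(\kappa)\in(0,1/2)$: this is because $\kappa$ is the kneading sequence of a tent map of slope $\lambda\in(\sqrt2,2)$, a condition which (by the observation preceding the lemma, equivalently by Remark~\ref{rmk:range} together with part~(a)) is equivalent to $q(\kappa)\in(0,1/2)$. Thus part~(f) applies to~$\kappa$. If $\kappa=\lhe(q)$ we are done; otherwise write $q=m/n$ and $\kappa=(w_q1)^k\,t$ as in~(f), with $k\ge 0$ and either $t=c_q\ldots$ or $q(t)<q$. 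Since $w_q\in\{0,1\}^{n-1}$, the block $w_q1$ has length~$n$, so that $t=\sigma^{kn}(\kappa)$.

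The next step is to rule out the alternative $q(t)<q$. Since $\kappa=j(b)$ and~$b$ is the right-hand endpoint of~$I$, Remark~\ref{rmk:order} shows that~$\kappa$ is shift-maximal, so $\sigma^r(\kappa)\preceq\kappa$ for all $r\ge 0$; in particular $t=\sigma^{kn}(\kappa)\preceq\kappa$. As the height function is decreasing with respect to the unimodal order (a consequence of Lemma~\ref{lem:cq1-maximal}), this yields $q(t)\ge q(\kappa)=q$, contradicting $q(t)<q$. Hence $t=c_q\ldots$.

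It then remains to show $k=0$. If $k\ge 1$ then~$\kappa$ begins with the block $w_q1$, whereas $t=c_q\ldots$ begins with $c_q=w_q01$, and hence with $w_q0$; so~$t$ and~$\kappa$ first disagree in position $n-1$, where the relevant initial word of~$t$ is $w_q0$, which is odd because~$w_q$ is. Therefore $t\succ\kappa$ in the unimodal order, contradicting $t=\sigma^{kn}(\kappa)\preceq\kappa$ (note $kn\ge n>0$). So $k=0$, and $\kappa=t=c_q\ldots$, as required.

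I expect the step needing the most care to be the elimination of $q(t)<q$: the real content of the argument is recognising that shift-maximality of~$\kappa$ together with the monotonicity of height is precisely what forbids this alternative. After that, only routine manipulation of the unimodal order is involved, and the structure of the proof is otherwise dictated by part~(f).
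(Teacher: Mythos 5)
Your argument addresses only part~(g) of the lemma (taking (f) and the monotonicity of height as given), so it does not cover parts (a)--(f), which the paper handles mostly by citation to \cite{HS} and \cite{stars} together with short direct arguments for (e) and (f). For part~(g) itself your proof is correct and is essentially the paper's: the paper also deduces (g) from (f) by noting that shift-maximality of $\kappa$ forces $k=0$ (since $k>0$ would give $\sigma^{nk}(\kappa)\succ\kappa$). The only cosmetic difference is that you first eliminate the alternative $q(t)<q$ via the monotonicity of height and then show $k=0$ by a parity comparison of $w_q1$ with $w_q0$, whereas the paper's single observation that $t\succ\kappa$ whenever $k>0$ disposes of both alternatives at once, the case $q(t)<q$ then being vacuous because $t=\kappa$.
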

\begin{proof}
For (a), the characterization of height~$0$ is immediate from the
definition of height and the fact that $c_{1/n}=10^{n-1}1$; and the
characterization of height~$1/2$ is Lemma~3.3 of~\cite{HS}. (b) is
Lemma~3.4 of~\cite{HS}, and~(c) follows from the definition of height
and that $L_q$ does not pass through any integer lattice points other
than~$(0,0)$. (d) is Lemma~63 of~\cite{stars}. For~(e), we need only
observe that if $s=c_q\ldots$ then $s\succ \lhe(q)$, and use~(b).

For~(f), if $s\not=\lhe(q)$ then $s\succ\lhe(q)$. Let $k\ge 0$ be
greatest such that $s$ starts with the word $(w_q1)^k$: then
$s=(w_q1)^k\,t$, where $t\succ\lhe(q)$ does not start with
$w_q1$. Since $t\succ\lhe(q)$ we have $q(t)\le q$. If $q(t)=q$ then,
by~(b) (and recalling that $c_q=w_q01$) we have $(w_q1)^\infty \prec t
\preceq w_q01(1\hw_q)^\infty$. Therefore, since~$t$ does not start
with $w_q1$, it must start with $w_q0$; moreover, since $w_q0$ is an
odd word, $t$ must start with $w_q01=c_q$, or it would be greater
than~$\rhe(q)$. This also proves~(g), using the observation that
if~$s=\kappa$ is a kneading sequence then, by
Lemma~\ref{lem:admissible}, we must have $k=0$, since otherwise we
would have $\sigma^{nk}(\kappa)\succ\kappa$.
\end{proof}

\begin{example}
Let $q=1/3$, so that $c_q=1001$, $w_q=10$, and $\hw_q=01$. Then
$\lhe(q)=(101)^\infty$ and
$\rhe(q)=1001(101)^\infty=10(011)^\infty$. Therefore $q(s)=1/3$ if and
only if $(101)^\infty \preceq s\preceq 10(011)^\infty$.
\end{example}

We will say that $f$ is of \emph{irrational type} if $q(\kappa(f))$ is
irrational; that it is of \emph{rational (left hand or right hand) endpoint
type} if $\kappa(f)$ is equal to $\lhe(q)$ or $\rhe(q)$ respectively for some
rational~$q$; and that it is of \emph{rational interior type} otherwise.

The following result is the essential fact which makes it possible to
relate heights of forward sequences to heights of backward
sequences. The real content of the lemma is the final sentence --- the
infimal height forward is equal to the infimal height backward, for
any bi-infinite sequence~$S$ which does not start or end $0^\infty$.


\begin{lem}
\label{lem:technical}
Let $S\in\{0,1\}^\Z$.
\begin{enumerate}[(a)]
\item If $S$ does not end $0^\infty$ then $\inf_{r\in\Z}\,
  q(\bd{\sigma^r(S)}) \le \inf_{r\in\Z}\,
  q(\fd{\sigma^r(S)})$.
\item If $S$ does not start $0^\infty$ then $\inf_{r\in\Z}\,
  q(\fd{\sigma^r(S)}) \le \inf_{r\in\Z}\, q(\bd{\sigma^r(S)})$.
\end{enumerate}
In particular, if $S$ neither starts nor ends $0^\infty$, then $\inf_{r\in\Z}\,
  q(\fd{\sigma^r(S)}) = \inf_{r\in\Z}\, q(\bd{\sigma^r(S)})$.
\end{lem}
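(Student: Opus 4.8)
The plan is to prove part~(a) and to deduce~(b) from it by a symmetry argument; the final sentence is then immediate. For the symmetry, given $S$ let $S'\in\{0,1\}^\Z$ be its reversal, $S'_r=S_{-1-r}$: a direct computation gives $\fd{\sigma^t(S')}=\bd{\sigma^{-t}(S)}$ and $\bd{\sigma^t(S')}=\fd{\sigma^{-t}(S)}$ for every $t$, and ``$S'$ does not end $0^\infty$'' is the same as ``$S$ does not start $0^\infty$'', so~(a) applied to~$S'$ reads exactly as~(b) for~$S$. To prove~(a), assume $S$ does not end $0^\infty$, write $Q^+=\inf_r q(\fd{\sigma^r(S)})$ and $Q^-=\inf_r q(\bd{\sigma^r(S)})$, and show $Q^-\le Q^+$. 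Since $q\le 1/2$ always, we may assume $Q^+<1/2$; it then suffices, for each rational $p\in(Q^+,1/2)$, to produce an index $r'$ with $q(\bd{\sigma^{r'}(S)})\le p$, because then $Q^-\le p$ and we let $p\downarrow Q^+$ through the rationals. Fix such a $p$, pick $r$ with $q(\fd{\sigma^r(S)})<p$, and set $u=\fd{\sigma^r(S)}$; by Lemma~\ref{lem:height}(a), $q(u)\in(0,p)$ since $S$ does not end $0^\infty$ (so $u\ne 10^\infty$).

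The whole argument reduces to the following claim: \emph{$u$ has, as a factor, the word $c_{p'}$ for some rational $p'\in(0,p]$.} Granting this, the key point is that each $c_{p'}$ is a palindrome (visibly so from its definition as a cutting word of $L_{p'}$): if $S_j\cdots S_{j+|c_{p'}|-1}=c_{p'}$, then $\bd{\sigma^{j+|c_{p'}|}(S)}=S_{j+|c_{p'}|-1}\cdots S_j\cdots$ begins with the reverse of $c_{p'}$, which is $c_{p'}$ itself, so by Lemma~\ref{lem:height}(e) its height is at most $p'\le p$. Taking $r'=j+|c_{p'}|$ completes~(a), and hence the lemma.

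It remains to prove the claim, and this is the step I expect to be the main obstacle. I would argue by induction, using Lemma~\ref{lem:height}(f) as the inductive step and handling two ``base'' situations explicitly. If $q(u)$ is irrational then $u=c_{q(u)}$ by Lemma~\ref{lem:height}(c), and its initial segments of the form $10^{k_1}110^{k_2}11\cdots110^{k_l}1$ (with $k_i=k_i(q(u))$) are, by the balanced structure of cutting sequences, the palindromic words $c_{p'_l}$ for a sequence of rationals $p'_l\to q(u)$; since $q(u)<p$ we get $p'_l<p$, hence a suitable factor, for all large~$l$. If $q(u)$ is rational, Lemma~\ref{lem:height}(f) gives three cases: (i) $u=(w_{q(u)}1)^k\,t$ with $t=c_{q(u)}\ldots$, so $c_{q(u)}$ is a factor of $u$ and $q(u)<p$, done; (ii) $u=(w_{q(u)}1)^k\,t$ with $q(t)<q(u)$, where $t=\fd{\sigma^{r+kn}(S)}$ ($n$ the denominator of $q(u)$) is a later shift of $S$ of strictly smaller height, and we recurse on it; (iii) $u=\lhe(q(u))=(w_{q(u)}1)^\infty$, where --- and this is the delicate point --- the prefixes $(w_{q(u)}1)^l$ of $u$ are precisely the palindromic words $c_{p'_l}$ with $p'_l=ml/(ln-1)\downarrow q(u)$, so again $c_{p'_l}$ is a factor of $u$ with $p'_l<p$ for large~$l$. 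The recursion in case~(ii) produces a strictly decreasing sequence of rational heights; if it fails to terminate, those heights decrease to some $q_\infty<p$ and the corresponding shifts of $S$ approach a limiting sequence of height $q_\infty$ of irrational or left-hand endpoint type, covered by the base cases. The technical heart of the whole proof is thus the identity (in cases (iii) and the irrational case) between powers of $w_{q(u)}1$, resp.\ prefixes of the Sturmian word $c_{q(u)}$, and the cutting words $c_{p'_l}$ for rationals $p'_l\to q(u)^+$ --- this follows from the definition of the $c_q$ as cutting sequences (and is closely related to Lemma~\ref{lem:height}(d) and the palindromicity of $c_{q(u)}$), and it is the ingredient that makes ``small forward height'' force the appearance of a genuinely palindromic witness that can be read backwards.
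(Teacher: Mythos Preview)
Your case~(iii) contains a concrete error: the claim that the prefixes $(w_q 1)^l$ of $\lhe(q)$ are the words $c_{p'_l}$ with $p'_l = ml/(ln-1)$ is false in general. For $q = 2/5$ one has $w_q 1 = 10111$, which is not even a palindrome and hence cannot equal any $c_{p'}$; your formula gives $p'_1 = 2/4 = 1/2$, but $c_{1/2} = 101 \ne 10111$. (The claim happens to hold for $q = 1/n$, where $w_q 1 = 10^{n-2}1$ \emph{is} a palindrome, which may be the source of the slip.) It is in fact true that $(w_q 1)^\infty$ has palindromic $c_{p'}$ prefixes with $p' \downarrow q$ --- for $q = 2/5$ these are $c_{3/7} = 10111101$, $c_{5/12} = 1011110111101$, \ldots, of lengths $5l-2$ rather than $5l$ --- but this is a separate Sturmian fact that you have not established. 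Your treatment of a non-terminating recursion in case~(ii) is also not rigorous: the successive suffixes $u_j = \sigma^{n_j}(u)$ need not converge, and even if a subsequence does, its limit is not a priori a factor of $S$, so you cannot simply invoke the base cases on it to produce a $c_{p'}$ factor inside $S$.

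The paper's proof avoids all of this and is much shorter. Having fixed a rational $q=m/n$ strictly between the two infima and arranged (after a shift) that $q(\fd{S}) < q$, one has $\fd{S} \succ \rhe(q) = c_q(1\hw_q)^\infty$. If $\fd{S}$ begins with $c_q$, palindromicity of $c_q$ immediately gives $\bd{\sigma^{n+1}(S)}=c_q\ldots$, of height $\le q$ by Lemma~\ref{lem:height}(e). If not, then since $\fd{S}\succ c_q\ldots$, the first disagreement with $c_q$ must be a strictly longer block of $0$s: $\fd{S} = 10^{k_1}11\cdots 110^{k_{i-1}}110^{k_i+\ell}1\ldots$ for some $1\le i\le m$ and $\ell\ge 1$ (the terminal $1$ uses that $S$ does not end $0^\infty$). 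Reading backwards from just past that $1$ and using the palindromicity $k_j = k_{m+1-j}$ rewrites the resulting initial word as $10^{k_{m+1-i}+\ell}110^{k_{m+2-i}}\cdots 110^{k_m}1$, which is exactly the form appearing in Lemma~\ref{lem:height}(d); hence this backward shift exceeds $(c_q 0)^\infty$ and has height $\le q$ directly from the definition. There is no recursion and no Sturmian identity beyond~(d) and~(e).
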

\begin{proof}
To prove~(a), suppose for a contradiction that~$S$ does not end $0^\infty$ and
  that $\inf_{r\in\Z}\, q(\fd{\sigma^r(S)}) < \inf_{r\in\Z}\,
  q(\bd{\sigma^r(S)})$. Let~$q=m/n$ be a rational with $\inf_{r\in\Z}\,
  q(\fd{\sigma^r(S)}) < q < \inf_{r\in\Z}\, q(\bd{\sigma^r(S)})$. Then there is
  some $k\in\Z$ with $q(\fd{\sigma^k(S)}) < q$: replacing~$S$ with one of its
  shifts, we can assume without loss of generality that~$k=0$, so that
  $q(\fd{S})<q$. We will show that there is some $r\in\Z$ with
  $q(\bd{\sigma^r(S)})\le q$, which will be the required contradiction to $q <
  \inf_{r\in\Z}\, q(\bd{\sigma^r(S)})$.

Since $q(\fd{S})<q$, Lemma~\ref{lem:height}(b) gives that
$\fd{S}\succ \rhe(q)=
c_q(1\hw_q)^\infty$. If $\fd{S}=c_q\ldots$, then, since $c_q$ is
palindromic, we have
$\bd{\sigma^{n+1}(S)}=c_q\ldots$, and hence
$q(\bd{\sigma^{n+1}(S)})\le q$ by Lemma~\ref{lem:height}(e). 
We therefore suppose that $c_q$ is not an initial subword of $\fd{S}$,
so that there is some $i$ with $1\le i\le m$ and some $\ell\ge 1$ with
$\fd{S} = 10^{k_1}110^{k_2}11 \ldots 11 0^{k_{i-1}} 11 0^{k_i + \ell}
1\ldots$ (we write $k_j=k_j(q)$ and use the fact that $S$ does not end
$0^\infty$ to get the final~$1$). Writing~$r$ for the length of this
initial subword of~$\fd{S}$, we have
\begin{eqnarray*}
\bd{\sigma^r(S)} &=& 10^{k_i+\ell}110^{k_{i-1}}11 \ldots 11
0^{k_2}110^{k_1}1\ldots\\ &\succeq& 10^{k_i+1}110^{k_{i-1}}11 \ldots
11 0^{k_2}110^{k_1}1\ldots \\ &=& 10^{k_{m+1-i}+1} 11 0^{k_{m+2-i}} 11
\ldots 11 0^{k_{m-1}} 11 0^{k_m}1\ldots \quad \text{(since $c_q$ is
  palindromic)}\\ &\succ& (c_q0)^\infty
\qquad\qquad\qquad\qquad\qquad\qquad\qquad\qquad\qquad\text{(by
  Lemma~\ref{lem:height}(d)),}
\end{eqnarray*}
so that $q(\bd{\sigma^r(S)})\le q$ by the definition of height, as
required.

\medskip

Statement~(b) follows by applying~(a) to the reverse of~$S$.
\end{proof}

\begin{remark}
It is possible for one of the infima to be a minimum, and the other
not to be attained. Consider, for example, the sequence~$S$ with
$\fd{S}=(101)^\infty = \lhe(1/3)$, and $\bd{S}=1^\infty$. Then
$q(\fd{S})=1/3$, but $q(\bd{\sigma^r(S)})>1/3$ for all $r\in\Z$.
\end{remark}

\subsection{Backward admissibility conditions}

In this section we will state and prove `backward' admissibility
conditions: that is, admissibility conditions which are expressed in
terms of $\bd{\sigma^r(S)}$ rather than $\fd{\sigma^r(S)}$. We do this
first in the symmetric case (Theorem~\ref{thm:symmetric}), where they
are analogous to the `forward' conditions of
Lemma~\ref{lem:admiss-invlim-forward}; and then in the non-symmetric
case (Theorem~\ref{thm:non-symmetric}), where they take a quite
different form. We start with a lemma which will be the main part of
the proof of necessity for both theorems.

\newpage 

\begin{lem}
\label{lem:upper-bounds}
Write $\kappa=\kappa(f)$ and $q=q(\kappa)$. Let $S\in J_f$.
\begin{enumerate}[(a)]
\item If $q$ is irrational, then $\bd{S}\preceq \kappa$.
\item If $q=m/n$ is rational, then $\bd{S}\preceq\rhe(q)$. Moreover,
  if either $\kappa=\lhe(q)$ or $\fd{S}\succ \sigma^{n+1}(\kappa)$,
  then $\bd{S}\preceq\lhe(q)$.
\end{enumerate}
\end{lem}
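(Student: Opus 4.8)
Write $q=q(\kappa)$, so $q\in(0,1/2)$ since $\lambda\in(\sqrt2,2)$. The foundation of the argument is that every $S\in J_f$ neither starts nor ends $0^\infty$: it does not start $0^\infty$ by Lemma~\ref{lem:admiss-invlim-forward}(B), and if it ended $0^\infty$ then, letting $r$ be the largest index with $S_r=1$, we would get $\fd{\sigma^r(S)}=10^\infty\succ\kappa$ (using $\kappa\prec10^\infty$, Remark~\ref{rmk:range}), contradicting Lemma~\ref{lem:admiss-invlim-forward}(A). So Lemma~\ref{lem:technical} applies and $\inf_{r\in\Z}q(\bd{\sigma^r(S)})=\inf_{r\in\Z}q(\fd{\sigma^r(S)})$; since $\fd{\sigma^r(S)}\preceq\kappa$ for all $r$ and the height function is decreasing, this common infimum is $\ge q$, and in particular $q(\bd{\sigma^r(S)})\ge q$ for every $r\in\Z$.

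Part~(a) and the first assertion of~(b) now follow from the height characterisations of Lemma~\ref{lem:height}. In case~(a), $\kappa=c_q$ by Lemma~\ref{lem:height}(c); if $q(\bd S)=q$ then $\bd S=c_q=\kappa$, and if $q(\bd S)>q$ then $\bd S\prec c_q$ (since $\bd S\succeq c_q$ would force $q(\bd S)\le q$ by monotonicity), so $\bd S\preceq\kappa$ in either case. Case~(b) is identical with $\rhe(q)$ in place of $c_q$ and Lemma~\ref{lem:height}(b) in place of~(c), giving $\bd S\preceq\rhe(q)$.

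For the ``Moreover'' clause I would assume $\bd S\succ\lhe(q)$ and deduce both $\kappa\ne\lhe(q)$ and $\fd S\preceq\sigma^{n+1}(\kappa)$, which contradicts each disjunct of the hypothesis. From $\lhe(q)\prec\bd S\preceq\rhe(q)$ and Lemma~\ref{lem:height}(b) we get $q(\bd S)=q$, so Lemma~\ref{lem:height}(f) gives $\bd S=(w_q1)^k t$ with either $q(t)<q$ or $t=c_q\ldots$; the first is impossible, since then $\bd{\sigma^{-kn}(S)}=t$ (equal to $\bd S$ when $k=0$) would have height $<q$, against the first paragraph. Hence $\bd S=(w_q1)^k c_q\ldots$. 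The computational core is to reverse the initial block $(w_q1)^k c_q$ of $\bd S$, of length $(k+1)n+1$: because $c_q$ is palindromic and $c_q=w_q01=10\hw_q$, the reversed block equals $c_q(1\hw_q)^k$, whence
\[
\fd{\sigma^{-(k+1)n-1}(S)}=c_q(1\hw_q)^k\,\fd S .
\]
Admissibility condition~(A) forces this to be $\preceq\kappa$. If $\kappa=\lhe(q)=(w_q1)^\infty$ this already fails, since $c_q(1\hw_q)^k\fd S$ begins with the odd word $w_q0$ and first disagrees with $(w_q1)^\infty$ at position $n-1$, hence is $\succ(w_q1)^\infty$; so $\kappa\ne\lhe(q)$, and by Lemma~\ref{lem:height}(g) we may write $\kappa=c_q\,v$ with $v=\sigma^{n+1}(\kappa)$, where $v\preceq(1\hw_q)^\infty$ because $\kappa\preceq\rhe(q)$. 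Cancelling the even prefix $c_q$ turns the display into $(1\hw_q)^k\fd S\preceq v$.

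It remains to show $\fd S\preceq v$. First, $v$ must begin with $(1\hw_q)^k$: otherwise, comparing $\fd{\sigma^{-(k+1)n-1}(S)}=c_q(1\hw_q)^k\fd S$ with $\kappa=c_qv$ and with $\rhe(q)=c_q(1\hw_q)^\infty$ at the first position where $v$ disagrees with $(1\hw_q)^\infty$ — a position lying inside the block $(1\hw_q)^k$, where $\fd{\sigma^{-(k+1)n-1}(S)}$ still coincides with $\rhe(q)$ — and using that $\kappa\preceq\rhe(q)$ makes the relevant prefix of $\rhe(q)$, hence of $\fd{\sigma^{-(k+1)n-1}(S)}$, odd, one obtains $\fd{\sigma^{-(k+1)n-1}(S)}\succ\kappa$, contradicting~(A). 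So $v=(1\hw_q)^k v'$; cancelling the even prefix $(1\hw_q)^k$ from $(1\hw_q)^k\fd S\preceq v$ gives $\fd S\preceq v'$, and $v'\preceq v$ (immediate if $v=(1\hw_q)^\infty$; otherwise write $v=(1\hw_q)^{k_0}u$ with $k_0\ge k$ maximal, so $u$ does not begin with $1\hw_q$, and a short parity argument gives $u\prec(1\hw_q)^k u$, whence $v'=(1\hw_q)^{k_0-k}u\preceq(1\hw_q)^{k_0}u=v$). Thus $\fd S\preceq v=\sigma^{n+1}(\kappa)$, completing the contradiction, so $\bd S\preceq\lhe(q)$. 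I expect the only genuinely delicate step to be this $k\ge1$ analysis: once~(A) has forced the prepended sequence $(1\hw_q)^k\fd S$ to be small, recovering $\fd S\preceq\sigma^{n+1}(\kappa)$ requires careful tracking of the parities of $c_q$, $1\hw_q$ and $w_q0$, of the palindrome identity $c_q=10\hw_q$, and of the structure $\kappa=c_q\cdot\sigma^{n+1}(\kappa)$ with $\sigma^{n+1}(\kappa)\preceq(1\hw_q)^\infty$; everything before it is routine.
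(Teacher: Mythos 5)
Your proof is correct, and for part~(a), the first claim of~(b), and the $\kappa=\lhe(q)$ case of the ``moreover'' clause it follows the paper's argument closely: the same transfer of height bounds between $\fd{\sigma^r(S)}$ and $\bd{\sigma^r(S)}$ via Lemma~\ref{lem:technical}, the same decomposition $\bd{S}=(w_q1)^k c_q\ldots$ via Lemma~\ref{lem:height}(f), and the same palindrome reversal giving $\fd{\sigma^{-(k+1)n-1}(S)}=c_q(1\hw_q)^k\,\fd{S}$. Your preliminary step --- establishing $q(\bd{\sigma^r(S)})\ge q$ for all $r$ once and for all --- is a tidy repackaging of the contradiction arguments that the paper repeats in several places. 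Where you genuinely diverge is the endgame of the ``moreover'' clause in the rational interior case. The paper writes $\kappa=c_q u$ with $u=\sigma^{n+1}(\kappa)$ (your $v$), assumes $\fd{S}\succ u$, and feeds this into the reversal identity: condition~(A) then gives $c_q(1\hw_q)^k u\prec c_q u$, hence $(1\hw_q)^k u\prec u$, hence $u\succ(1\hw_q)^\infty$ and $\kappa\succ\rhe(q)$ --- a contradiction in three lines. You instead prove the stronger contrapositive that $\bd{S}\succ\lhe(q)$ alone forces $\fd{S}\preceq\sigma^{n+1}(\kappa)$, which obliges you to verify two extra facts: that $v$ begins with $(1\hw_q)^k$ (your comparison of $\kappa$ with $\rhe(q)$ inside the block $(1\hw_q)^k$ is correct), and that $\sigma^{kn}(v)\preceq v$. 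Both steps check out, but they cost noticeably more parity bookkeeping than the paper's contradiction. The trade-off is that your route isolates a slightly sharper intermediate statement --- any admissible $S$ with $\bd{S}\succ\lhe(q)$ automatically satisfies $\kappa\ne\lhe(q)$ and $\fd{S}\preceq\sigma^{n+1}(\kappa)$ --- whereas the paper's argument is shorter.
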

\begin{proof}
By Lemma~\ref{lem:admiss-invlim-forward}, $S$ does not start $0^\infty$.
\begin{enumerate}[(a)]
\item Suppose for a contradiction that~$q$ is irrational and that
  $\bd{S}\succ\kappa$. Then $q(\bd{S})<q$, since, by
  Lemma~\ref{lem:height}(c), $\kappa$ is the unique element of
  $\{0,1\}^\N$ with height~$q$. By Lemma~\ref{lem:technical}(b) there
  is some~$r$ with $q(\fd{\sigma^r(S)})<q$, so that
  $\fd{\sigma^r(S)}\succ\kappa$, again by
  Lemma~\ref{lem:height}(c). This contradicts
  Lemma~\ref{lem:admiss-invlim-forward}.
\item Now let $q=m/n$ be rational. If $\bd{S}\succ\rhe(q)$ then
  $q(\bd{S})<q$, and we get a contradiction to
  Lemma~\ref{lem:admiss-invlim-forward} as in~(a). We will therefore
  suppose that $\bd{S}\preceq\rhe(q)$ in the remainder of the proof.

To prove the `moreover' statement, consider first the case where
$\kappa=\lhe(q)$, and assume for a contradiction that
$\bd{S}\succ\lhe(q)=(w_q1)^\infty$. By Lemma~\ref{lem:height}(f), we
can write $\bd{S} = (w_q1)^kt$, where $k\ge 0$, and either $q(t)<q$ or
$t=c_q\ldots$. If $q(t)<q$ then we get a contradiction as in~(a). On
the other hand, if $t=c_q\ldots$ then
$\fd{\sigma^{-(k+1)n-1}(S)}=c_q\ldots\succ\kappa$ (since $c_q$ is
palindromic), contradicting Lemma~\ref{lem:admiss-invlim-forward}.

It remains to show that if $\lhe(q)\prec \kappa\preceq\rhe(q)$ and
$\sigma^{n+1}(\kappa)\prec\fd{S}\preceq\rhe(q)$, then $\bd{S}\preceq\lhe(q)$.
Using Lemma~\ref{lem:height}(g), we write $\kappa=c_q u$, where
$u=\sigma^{n+1}(\kappa)$. Suppose for a contradiction that
\mbox{$\bd{S}\succ\lhe(q)$}. In particular, $q(\bd{S})=q$ since
$\bd{S}\preceq\rhe(q)$.

By Lemma~\ref{lem:height}(f) we have $\bd{S}=(w_q1)^kt$ for some $k\ge 0$,
where either $q(t)<q$ or $t=c_q\ldots$. If $q(t)<q$ then we get a contradiction
to Lemma~\ref{lem:admiss-invlim-forward} as in~(a), so we can assume that
$t=c_q\ldots$. Then $\fd{\sigma^{-(k+1)n-1}(S)}=c_q(1\hw_q)^k\fd{S}
\succ c_q(1\hw_q)^ku$, since $\fd{S}\succ u$ by assumption. By
Lemma~\ref{lem:admiss-invlim-forward} we therefore have $c_q(1\hw_q)^ku \prec
\kappa = c_qu$, so that $k>0$ and $(1\hw_q)^ku
\prec u$. However this inequality gives $u\succ(1\hw_q)^\infty$, so
that $\kappa=c_qu\succ c_q(1\hw_q)^\infty=\rhe(q)$, which is the
required contradiction.

\end{enumerate}
\end{proof}

\begin{thm}[Backward admissibility conditions for~$\hI$: symmetric
    case]
\label{thm:symmetric}
Suppose that~$f$ is either of irrational type, or of rational endpoint
type. Write $\kappa(f)=\kappa$. Let $S\in\{0,1\}^\Z$. Then $S\in J_f$
if and only if the following three conditions hold:
\begin{enumerate}[(a)]
\item $\bd{\sigma^r(S)}\preceq\kappa$ for all $r\in\Z$;
\item $S$ does not end $0^\infty$; and 
\item If $f$ is of left hand endpoint type and
  $\fd{\sigma^r(S)}=\kappa$ for some $r\in\Z$, then $S_{r-1}=1$.
\end{enumerate}
\end{thm}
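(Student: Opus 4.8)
The plan is to prove the two implications separately, using the forward admissibility conditions of Lemma~\ref{lem:admiss-invlim-forward} together with the height machinery of Section~\ref{sec:height}.

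\textbf{Necessity.} Suppose $S\in J_f$. Condition~(b) is immediate: by Lemma~\ref{lem:admiss-invlim-forward}(B), $S$ does not start $0^\infty$, and since $f$ is of irrational or rational endpoint type, $\kappa$ does not end $0^\infty$ either (as $q(\kappa)\in(0,1/2)$); reversing $S$ and applying the same observation gives that $S$ does not end $0^\infty$. (More carefully: if $S$ ended $0^\infty$, then its reverse would start $0^\infty$, and one checks directly that this is incompatible with all forward shifts being $\preceq\kappa$, exactly as in the proof of Lemma~\ref{lem:admiss-invlim-forward}(B).) Condition~(a) is the heart of the matter, and it is supplied almost entirely by Lemma~\ref{lem:upper-bounds}: in the irrational case, part~(a) of that lemma gives $\bd{S}\preceq\kappa$, and since $\sigma^r(S)\in J_f$ for all $r$ we get $\bd{\sigma^r(S)}\preceq\kappa$ for all $r\in\Z$. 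In the rational endpoint case, $\kappa$ equals either $\lhe(q)$ or $\rhe(q)$. If $\kappa=\lhe(q)$, then Lemma~\ref{lem:upper-bounds}(b) gives $\bd{S}\preceq\lhe(q)=\kappa$ directly; if $\kappa=\rhe(q)$, then $\bd{S}\preceq\rhe(q)=\kappa$, again directly from Lemma~\ref{lem:upper-bounds}(b). Applying this to every shift of $S$ yields~(a). For condition~(c), assume $f$ is of left hand endpoint type, so $\kappa=\lhe(q)=(w_q1)^\infty$, and suppose $\fd{\sigma^r(S)}=\kappa$ for some $r$. Then $x_r=b$ under the projection, forcing $x_{r-1}=c$, and since $c$ is not periodic here (left hand endpoint type kneading sequences $(w_q1)^\infty$ are not of the form $(W\ve)^\infty$ with $\ve$ as in the periodic convention — one must check this, but it follows because $\ve(f)=0$ would be required for a periodic critical point, contradicting shift-maximality considerations) the itinerary of $x_{r-1}=c$ reads $\ldots\,S_{r-2}\,\opti\,\kappa$, and we need the `$1$' branch: $S_{r-1}=1$ follows because $\bd{\sigma^r(S)}\preceq\kappa=(w_q1)^\infty$ together with $\fd{\sigma^r(S)}=\kappa$ forces the entry $S_{r-1}$ to be $1$, since $S_{r-2}\ldots S_r = \ldots\,S_{r-1}\,1\,0\ldots$ and $w_q$ ends in $0$ while $(w_q1)$ ends in $1$ — a short parity/prefix argument using that $w_q$ is odd.

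\textbf{Sufficiency.} Conversely, suppose $S$ satisfies (a), (b), (c). By Lemma~\ref{lem:admiss-invlim-forward} it suffices to verify (A), (B), (C). For~(B) ($S$ does not start $0^\infty$): this follows from~(b) by the reversal argument, using that $\kappa$ does not end $0^\infty$, exactly mirroring the end of the proof of Lemma~\ref{lem:admiss-invlim-forward}. The crucial step is~(A): $\fd{\sigma^r(S)}\preceq\kappa$ for all $r$. Here I would argue by contradiction. If some $\fd{\sigma^r(S)}\succ\kappa$, then $q(\fd{\sigma^r(S)})\le q$, so $\inf_{s}q(\fd{\sigma^s(S)})\le q$; by Lemma~\ref{lem:technical} (applicable since $S$ neither starts nor ends $0^\infty$, by~(B) and~(b)) this infimum equals $\inf_s q(\bd{\sigma^s(S)})$, hence some $\bd{\sigma^s(S)}$ has height $\le q$. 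In the irrational case, height $<q$ is impossible without some backward shift exceeding $\kappa$ (violating~(a)), and height $=q$ forces $\bd{\sigma^s(S)}=\kappa$, so in fact $\fd{\sigma^r(S)}\succ\kappa$ can only be a non-strict failure — one needs to trace through that $\bd{\sigma^s(S)}=\kappa$ with all backward shifts $\preceq\kappa$ actually forces the forward shift to equal $\kappa$ too, contradicting strict inequality. In the rational endpoint case one runs the same argument with $\lhe(q)$ or $\rhe(q)$ in place of $\kappa$, using Lemma~\ref{lem:height}(b) and the shift-maximality of $\kappa$, and~(c) handles the boundary case where $\fd{\sigma^r(S)}$ would equal $\kappa=\lhe(q)$ with the wrong preceding symbol. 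Condition~(C) (the periodic case) is vacuous when $f$ is of irrational or rational endpoint type, provided one first checks that $c$ is never periodic for such $f$; this check is routine from the explicit forms of $\lhe(q)$, $\rhe(q)$, and $c_q$ (which are not of the form $(W\ve(f))^\infty$ with the parity convention satisfied).

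\textbf{Main obstacle.} The hard part will be the strict-versus-nonstrict bookkeeping in proving~(A) from~(a), i.e.\ closing the gap between ``some backward shift has height $\le q$'' and ``some backward shift is $\succeq\kappa$ strictly'' when $q$ is rational and $\kappa$ is an endpoint sequence. One must carefully use Lemma~\ref{lem:height}(b) to see that $q(\bd{\sigma^s(S)})=q$ together with $\bd{\sigma^s(S)}\preceq\kappa$ pins down $\bd{\sigma^s(S)}$ to lie in $[\lhe(q),\kappa]$, and then push this information back through the palindromicity of $c_q$ (as in the proof of Lemma~\ref{lem:upper-bounds}) to constrain the corresponding forward shift — the interaction between the two endpoints $\lhe(q)$ and $\rhe(q)$ and the precise location of $\kappa$ between them is where the argument is most delicate, and is also exactly the point where condition~(c) is needed.
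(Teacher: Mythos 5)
Your overall skeleton matches the paper's (Lemma~\ref{lem:upper-bounds} for necessity; Lemmas~\ref{lem:technical} and~\ref{lem:height} for sufficiency), but there are two genuine errors. First, you assert that the critical point~$c$ is not periodic when $f$ is of left hand endpoint type, and consequently that condition~(C) of Lemma~\ref{lem:admiss-invlim-forward} is vacuous throughout the symmetric case. This is false: if $\kappa=\lhe(q)=(w_q1)^\infty$ then the itinerary of~$b$ is periodic of period~$n$, hence $b$ is periodic (distinct points have distinct itineraries), hence $c=f^{n-1}(b)$ is periodic; and since $w_q$ is an odd word, $w_q1$ is even, so $\ve(f)=1$. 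This is precisely why condition~(c) appears in the theorem: it is condition~(C) of Lemma~\ref{lem:admiss-invlim-forward} specialised to $\ve(f)=1$, and it is \emph{not} implied by~(a) and~(b) --- your ``parity/prefix argument'' fails, because if $S_{r-1}=0$ then $\bd{\sigma^r(S)}$ begins with~$0$ and is automatically $\prec\kappa$, so~(a) places no constraint on $S_{r-1}$. With~(C) declared vacuous, your sufficiency direction would certify as admissible a sequence with $\fd{\sigma^r(S)}=\kappa$ and $S_{r-1}=0$, which is not in~$J_f$.

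Second, the key step of sufficiency is missing. The inference ``some forward shift has height $\le q$, hence by Lemma~\ref{lem:technical} some backward shift has height $\le q$'' is invalid because the infima need not be attained: the Remark following Lemma~\ref{lem:technical} exhibits a sequence with $q(\fd{S})=1/3$ but $q(\bd{\sigma^r(S)})>1/3$ for all~$r$. Lemma~\ref{lem:technical} only transfers strict inequalities $<q$, which does suffice when $q$ is irrational or $\kappa=\rhe(q)$ (there $\fd{\sigma^r(S)}\succ\kappa$ forces $q(\fd{\sigma^r(S)})<q$ by Lemma~\ref{lem:height}(b),(c)). The remaining case --- $\kappa=\lhe(q)$ and $q(\fd{\sigma^r(S)})=q$ --- is exactly the one you defer to your ``main obstacle'' without resolving. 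The paper handles it by writing $\fd{\sigma^r(S)}=(w_q1)^k t$ via Lemma~\ref{lem:height}(f), with either $q(t)<q$ (reducing to the strict case) or $t=c_q\ldots$, in which case the palindromicity of~$c_q$ yields $\bd{\sigma^{r+(k+1)n+1}(S)}=c_q\ldots\succ\lhe(q)=\kappa$, violating~(a). Until that case is carried out, the converse direction is not proved.
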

\begin{proof}
Suppose that $S\in J_f$. Then $\sigma^r(S)\in J_f$ for all~$r\in\Z$,
and so~(a) is a consequence of Lemma~\ref{lem:upper-bounds}. If~(b)
did not hold then (since some $S_r=1$ by
Lemma~\ref{lem:admiss-invlim-forward}(B)), there would be an~$r$
with $\fd{\sigma^r(S)}=10^\infty$, contradicting
Lemma~\ref{lem:admiss-invlim-forward}(A). Condition~(c) is immediate
from Lemma~\ref{lem:admiss-invlim-forward}(C).

For the converse, suppose that $S\not\in J_f$, so that one of
conditions~(A),~(B), and~(C) of
Lemma~\ref{lem:admiss-invlim-forward} fails. We must show that one
of conditions~(a),~(b), and~(c) is false. We write $q=q(\kappa)$.

If~(A) fails, then let $r\in\Z$ with
$\fd{\sigma^r(S)}\succ\kappa$. Suppose first that
$q(\fd{\sigma^r(S)})<q$ (which must necessarily be the case if $q$ is
irrational or $q$ is rational and $\kappa=\rhe(q)$). By
Lemma~\ref{lem:technical}(a), either~(b) is false or there is some
$i$ with $q(\bd{\sigma^i(S)})<q$, so that
$\bd{\sigma^i(S)}\succ\kappa$, and~(a) is false. It remains to
consider the case where $q=m/n$, $\kappa=\lhe(q)$ and
$q(\fd{\sigma^r(S)})=q$. By
Lemma~\ref{lem:height}(f), we have
$\fd{\sigma^r(S)}=(w_q1)^kt$, where either $q(t)<q$ or $t=c_q\ldots$.
 If $q(t)<q$ then, by Lemma~\ref{lem:technical}(a),
either (a) or~(b) is false; while if $t=c_q\ldots$ then
$\bd{\sigma^{r+(k+1)n+1}(S)}=c_q\ldots \succ\kappa$, and~(a) is
false. 

If~(B) fails, then either $S_r=0$ for all~$r$, in which case~(b) is
false; or there is some~$r$ with $\bd{\sigma^r(S)}=10^\infty$, in
which case~(a) is false. Clearly if~(C) fails then~(c) is false.
\end{proof}

\begin{remark}
It is immediate from Lemma~\ref{lem:admiss-invlim-forward} and
Theorem~\ref{thm:symmetric} that if $f$ is of irrational type, or if
$\kappa(f)=\rhe(m/n)$ for some~$m/n$, then the \emph{reversing} function
$\rho\colon\{0,1\}^\Z\to\{0,1\}^\Z$ defined by $\rho(S)_r = S_{-r}$ restricts
to a homeomorphism $J_f\to J_f$, which conjugates the symbolic natural
extension $\sigma\colon J_f\to J_f$ to its inverse. On the other hand, if
$\kappa(f)=\lhe(m/n)$, then $\rho$ does not restrict to a self-homeomorphism
of~$J_f$. For example, if $\kappa(f)=\lhe(1/3)=(101)^\infty$, then the
sequences with $\bd{S}=(101)^\infty$ and \mbox{$\fd{S}=\opti 1^\infty$} are
both admissible (and hence correspond to different points of $\hI$); while the
sequence with \mbox{$\fd{S}=(101)^\infty$} and $\bd{S}=01^\infty$ is not
admissible: and, even if our conventions were changed so that it were, it would
represent the same point as the sequence with $\bd{S}=1^\infty$.
\end{remark}

\begin{thm}[Backward admissibility conditions for $\hI$: non-symmetric
    case]
\label{thm:non-symmetric}
Suppose that~$f$ is of rational interior type. Write
$\kappa(f)=\kappa$ and $q=m/n=q(\kappa)$. Let $S\in\{0,1\}^\Z$. Then $S\in
J_f$ if and only if the following four conditions hold:
\begin{enumerate}[(a)]
\item $\bd{\sigma^r(S)}\preceq \rhe(q)$ for all $r\in\Z$;
\item $\bd{\sigma^r(S)}\preceq \lhe(q)$ for all $r\in\Z$ for which
  $\fd{\sigma^r(S)}\succ\sigma^{n+1}(\kappa)$; 
\item $S$ does not end $0^\infty$; and
\item If~$c$ is periodic and $\fd{\sigma^r(S)}=\kappa$ for some
  $r\in\Z$, then $S_{r-1}=\ve(f)$.
\end{enumerate}
\end{thm}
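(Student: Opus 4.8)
\end{thm}

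The plan is to reduce the theorem to Lemma~\ref{lem:admiss-invlim-forward}, by showing that, when $f$ is of rational interior type, conditions~(a)--(d) are together equivalent to the forward conditions~(A)--(C) of that lemma. Throughout one writes $q=m/n=q(\kappa)$; since $f$ is of rational interior type, $\kappa\ne\lhe(q)$, so Lemma~\ref{lem:height}(g) allows one to write $\kappa=c_qu$ with $u=\sigma^{n+1}(\kappa)$, and one has $\lhe(q)\prec\kappa\prec\rhe(q)$. Three facts are used repeatedly: $c_q$ is palindromic, $c_q$ is an even word, and $w_q$ is an odd word; in particular $w_q1$ is even and $w_q0$ is odd.

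For the forward implication, suppose $S\in J_f$. Since $J_f$ is shift-invariant, $\sigma^r(S)\in J_f$ for every $r\in\Z$, so~(a) and~(b) are read off directly from Lemma~\ref{lem:upper-bounds}(b) applied to $\sigma^r(S)$: the bound $\bd{\sigma^r(S)}\preceq\rhe(q)$ is its first assertion, while the hypothesis $\fd{\sigma^r(S)}\succ\sigma^{n+1}(\kappa)$ of~(b) is exactly what triggers the `moreover' clause (recall $\kappa\ne\lhe(q)$), giving $\bd{\sigma^r(S)}\preceq\lhe(q)$. Condition~(d) is Lemma~\ref{lem:admiss-invlim-forward}(C). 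For~(c): if $S$ ended $0^\infty$ then $\fd{\sigma^R(S)}=0^\infty$ for some $R$, and since $0^\infty$ is the minimum for the unimodal order, Lemma~\ref{lem:admissible}(b) would force $\sigma(\kappa)=0^\infty$, hence $\kappa=10^\infty$, contradicting $\lambda<2$ via Remark~\ref{rmk:range}.

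For the converse I would assume~(a)--(d) and verify~(A),~(B),~(C); the theorem then follows from Lemma~\ref{lem:admiss-invlim-forward}. Condition~(C) is~(d). For~(B): if $S$ started $0^\infty$ then, using~(c) to exclude $S=0^\infty$, there is a first index $p$ with $S_p=1$, so $\bd{\sigma^{p+1}(S)}=10^\infty$; this has height $0<q=q(\rhe(q))$, so $\bd{\sigma^{p+1}(S)}\succ\rhe(q)$ since height is order-reversing, contradicting~(a). The substantial part is~(A). First I would rule out $q(\fd{\sigma^r(S)})<q$ for any $r$: if this held, then since~(c) holds Lemma~\ref{lem:technical}(a) would give $\inf_i q(\bd{\sigma^i(S)})\le\inf_i q(\fd{\sigma^i(S)})<q$, so some $\bd{\sigma^i(S)}$ would have height below $q=q(\rhe(q))$ and hence exceed $\rhe(q)$, contradicting~(a); thus $q(\fd{\sigma^r(S)})\ge q$ for all $r$. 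Now suppose, towards a contradiction to~(A), that $s:=\fd{\sigma^r(S)}\succ\kappa$ for some $r$, so that $q(s)=q$. By Lemma~\ref{lem:height}(f), $s=(w_q1)^kt$ for some $k\ge0$ (the case $s=\lhe(q)$ being impossible since $\lhe(q)\prec\kappa\prec s$), and the alternative $q(t)<q$ is excluded by the previous step applied to the shift $\fd{\sigma^{r+nk}(S)}=t$, so $t=c_q\ldots$. If $k\ge1$ then $s$ begins with $w_q1$ while $\kappa=c_qu$ begins with $w_q0$, so the word at their first disagreement is the even word $w_q1$, forcing $s\prec\kappa$ --- a contradiction; hence $k=0$, and then $s=c_qv$ with $v\succ u$ (using that $c_q$ is even). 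Consequently $\fd{\sigma^{r+n+1}(S)}=\sigma^{n+1}(s)=v\succ u=\sigma^{n+1}(\kappa)$, so condition~(b) applies at the index $r+n+1$ and yields $\bd{\sigma^{r+n+1}(S)}\preceq\lhe(q)$. But $S_r\ldots S_{r+n}=c_q$, so by palindromicity $\bd{\sigma^{r+n+1}(S)}$ begins with $c_q=w_q01$, hence with the odd word $w_q0$, hence $\bd{\sigma^{r+n+1}(S)}\succ\lhe(q)$ --- the required contradiction. This establishes~(A).

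I expect this last step to be the main obstacle, and it is where the asymmetry between forward and backward conditions surfaces: one must combine the height of a forward sequence, the structural trichotomy of Lemma~\ref{lem:height}(f), and --- crucially --- the palindromicity of $c_q$, which converts the forward bound supplied by condition~(b) at the index $r+n+1$ into a backward statement at that same index. Getting the index bookkeeping ($r$ versus $r+nk$ versus $r+n+1$) and the word parities right is exactly where care is needed, and much of the argument should be modelled on the proof of Lemma~\ref{lem:upper-bounds}.
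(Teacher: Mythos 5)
Your proof is correct and follows essentially the same route as the paper's: necessity via Lemma~\ref{lem:upper-bounds}, and sufficiency by reducing to Lemma~\ref{lem:admiss-invlim-forward}, splitting on whether $q(\fd{\sigma^r(S)})<q$ or $=q$, and using Lemma~\ref{lem:technical}, Lemma~\ref{lem:height}(f)--(g) and the palindromicity of $c_q$ to turn the forward violation into a violation of (a) or (b). The only differences are cosmetic: you argue the converse directly rather than by contraposition, and you justify that $\fd{\sigma^r(S)}$ begins with $c_q$ via Lemma~\ref{lem:height}(f) and a parity argument ruling out $k\ge 1$, where the paper asserts this directly.
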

\begin{proof}
The argument that if $S\in J_f$ then conditions (a)\,--\,(d) hold
proceeds in the same way as in the proof of
Theorem~\ref{thm:symmetric}, using
Lemmas~\ref{lem:admiss-invlim-forward} and~\ref{lem:upper-bounds}.

Suppose, then, that $S\not\in J_f$, so that one of
conditions~(A),~(B), and~(C) of
Lemma~\ref{lem:admiss-invlim-forward} fails. As in the proof of
Theorem~\ref{thm:symmetric}, if~(B) fails then either~(c) or~(a) is
false, and if~(C) fails then~(d) is false. To complete the proof, we
show that if~(A) fails then one of~(a),~(b), or~(c) is
false. We therefore assume that there is some~$r\in\Z$ with
$\fd{\sigma^r(S)}\succ\kappa$. Since~$f$ is of interior type, we have
$\kappa=c_q\ldots$ by Lemma~\ref{lem:height}(g).

If $q(\fd{\sigma^r(S)})<q$, then by Lemma~\ref{lem:technical}(a),
either~(c) is false, or there is some~$i$ with
$q(\bd{\sigma^i(S)})<q$, so that $\bd{\sigma^i(S)}\succ \rhe(q)$,
and~(a) is false.

If $q(\fd{\sigma^r(S)})=q$, then
$\fd{\sigma^r(S)}=c_q\,t$ for some $t\in\{0,1\}^\N$. Write
$\kappa=c_q\,u$, where $u\in\{0,1\}^\N$. Since
$\fd{\sigma^r(S)}\succ\kappa$ we have $\fd{\sigma^{r+n+1}(S)}=t\succ u =
\sigma^{n+1}(\kappa)$. On the other hand we have
$\bd{\sigma^{r+n+1}(S)}=c_q\,\ldots\succ\lhe(q)$: so condition~(b) is false.
\end{proof}

At this stage it is conceivable that the conditions of
Theorem~\ref{thm:non-symmetric} are in fact a symmetric version of
those of Lemma~\ref{lem:admiss-invlim-forward}, expressed in a
different way. Our final result establishes that this is not the case,
by showing that the maximum backward itinerary which can be realised
by a tent map with given kneading sequence mode locks on rational
height intervals. This contrasts with the maximum admissible forward
itinerary, which is the kneading sequence itself.

\begin{thm}[Mode-locking of maximum backward itinerary]
\label{thm:mode-lock}
Suppose that~$f$ is of rational interior type, with
$q(\kappa(f))=q\in\Q$. Then~$s=\rhe(q)$ is the greatest element
of~$\{0,1\}^\N$ with the property that there is some $S\in J_f$ with
$\bd{S}=s$.
\end{thm}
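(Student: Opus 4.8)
The plan is to prove two things: first, that $\rhe(q)$ is attainable as $\bd{S}$ for some $S\in J_f$; and second, that no $s\succ\rhe(q)$ is attainable. The second part is immediate: if $S\in J_f$ then Lemma~\ref{lem:upper-bounds}(b) gives $\bd{S}\preceq\rhe(q)$ directly, since $f$ is of rational type. So the entire content of the theorem is the construction of an $S\in J_f$ with $\bd{S}=\rhe(q)=c_q(1\hw_q)^\infty$.

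To build this $S$, I would use the admissibility conditions of Theorem~\ref{thm:non-symmetric} (or equivalently Lemma~\ref{lem:admiss-invlim-forward}) as the target to verify. The natural candidate is the sequence whose backward part is $\rhe(q)$ and whose forward part is chosen as small as possible so as not to violate condition~(b) of Theorem~\ref{thm:non-symmetric} --- recall that~(b) only bites when $\fd{\sigma^r(S)}\succ\sigma^{n+1}(\kappa)$, and that when $\bd{\sigma^r(S)}=c_q\ldots$ we are forced to keep $\fd{\sigma^r(S)}\preceq\sigma^{n+1}(\kappa)$. Since $\kappa=c_qu$ with $u=\sigma^{n+1}(\kappa)$ (using Lemma~\ref{lem:height}(g), as $f$ is of interior type), the idea is to take $\fd{S}=u=\sigma^{n+1}(\kappa)$. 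Explicitly, set $S$ so that $\fd{S}=\sigma^{n+1}(\kappa)$ and $\bd{S}=c_q(1\hw_q)^\infty$; that is, $S=\ldots(\hw_q1)(\hw_q1)\,w_q0\,\cdot\,u_0u_1u_2\ldots$ where the dot marks position~$0$ (so $S_{-1}=0$, the last symbol of $c_q=w_q01$ read backwards being consistent). I would then write down $\fd{\sigma^r(S)}$ and $\bd{\sigma^r(S)}$ for every $r\in\Z$ and check conditions (a)--(d) of Theorem~\ref{thm:non-symmetric}.

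The verification splits by the sign of~$r$. For $r\ge 0$, $\fd{\sigma^r(S)}=\sigma^r(u)=\sigma^{r+n+1}(\kappa)$ is a forward shift of the kneading sequence, hence admissible-looking; and $\bd{\sigma^r(S)}$ is a shift of $\rhe(q)$ into which some prefix of $u$ has been prepended, and one checks $\bd{\sigma^r(S)}\preceq\rhe(q)$ using shift-maximality of $\rhe(q)$ together with the fact that the prepended block comes from $u\preceq\sigma^{n+1}(\kappa)$. For $r<0$, $\bd{\sigma^r(S)}$ is a genuine tail of $\rhe(q)=c_q(1\hw_q)^\infty$, i.e.\ either $\sigma^{jn+?}$ applied to it lands on something of the form $(1\hw_q)^\infty$ or $\hw_q1(1\hw_q)^\infty$ etc.; these are all $\preceq\rhe(q)$ by shift-maximality, so~(a) holds. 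The delicate point is condition~(b): for $r<0$ one must check that whenever $\bd{\sigma^r(S)}\succ\lhe(q)$ we do \emph{not} have $\fd{\sigma^r(S)}\succ\sigma^{n+1}(\kappa)$. For $r\le -(n+1)$ this is where the choice $\fd{S}=u$ pays off: $\fd{\sigma^r(S)}$ then begins with (a suffix of) $c_q$ followed by $u$, and I would show $c_q u=\kappa$ forces the relevant inequality $\fd{\sigma^r(S)}\preceq\sigma^{n+1}(\kappa)$ to fail of being strict in exactly the right way --- more precisely, that $\fd{\sigma^r(S)}$ either disagrees with $\sigma^{n+1}(\kappa)$ on the low side or equals it, using that $\kappa$ is shift-maximal (Lemma~\ref{lem:admissible}(a)). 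Condition~(c) holds since $\rhe(q)=c_q(1\hw_q)^\infty$ plainly contains infinitely many~$1$s, so $S$ does not end $0^\infty$; and~(d) is vacuous or handled by the convention on $\ve(f)$, noting that $\fd{\sigma^r(S)}$ equals $\kappa$ only possibly when a tail of the backward part reconstitutes $c_q$ followed by $u$, i.e.\ at $r=-(n+1)$, and there the symbol $S_{r-1}=(c_q)_{\,\text{appropriate}}$ can be read off and checked against $\ve(f)$ directly.

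The main obstacle I anticipate is condition~(b) of Theorem~\ref{thm:non-symmetric} for negative~$r$ in the range $-(n+1)<r<0$, i.e.\ the indices where the ``window'' straddles position~$0$, so that $\fd{\sigma^r(S)}$ is a short word from the tail of $c_q$ followed immediately by $u$, while $\bd{\sigma^r(S)}$ is a correspondingly short suffix of $c_q$ reversed followed by $(1\hw_q)^\infty$. Here the palindromicity of $c_q$ and the explicit combinatorial description $c_q=10^{k_1}11\ldots 110^{k_m}1$ from Section~\ref{sec:height} will be essential: one must match up the parity of initial subwords of these truncated strings with the unimodal-order comparisons against $\lhe(q)=(w_q1)^\infty$ and against $\sigma^{n+1}(\kappa)=u$, and the claim is precisely that these two constraints are never simultaneously violated --- which is, of course, exactly the asymmetry the theorem is designed to exhibit. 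I would organize this as a short case analysis on where the index $r$ falls relative to the blocks of $c_q$, invoking Lemma~\ref{lem:height}(d) to control the order comparisons at block boundaries.
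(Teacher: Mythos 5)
The upper-bound half of your argument is fine: $\bd{S}\preceq\rhe(q)$ for all $S\in J_f$ is exactly Lemma~\ref{lem:upper-bounds}(b). The gap is in the construction. Your candidate $\fd{S}=\sigma^{n+1}(\kappa)=u$, $\bd{S}=\rhe(q)=c_q(1\hw_q)^\infty$ forces, by palindromicity of $c_q$, the equality $\fd{\sigma^{-(n+1)}(S)}=c_q\,u=\kappa$; and the preceding symbol is $S_{-(n+2)}=\bigl(\rhe(q)\bigr)_{n+1}=1$, the first symbol of the block $(1\hw_q)^\infty$ --- not, as you suggest, a symbol of $c_q$ that can be ``read off and checked against $\ve(f)$''. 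It is pinned to $1$ by the backward part, so condition~(d) of Theorem~\ref{thm:non-symmetric} (equivalently (C) of Lemma~\ref{lem:admiss-invlim-forward}) fails whenever $c$ is periodic with $\ve(f)=0$, and your $S$ is then simply not admissible: $\fd{\sigma^{-(n+1)}(S)}=\kappa$ forces $x_{-(n+1)}=b$ and hence $x_{-(n+2)}=c$, whose code is $\ve(f)=0$. A concrete instance: $q=1/3$, $\kappa=(10011010)^\infty$ (shift-maximal, even, interior to the height-$1/3$ interval, with $\ve(f)=0$); here $u=(10101001)^\infty$ and your $S$ has $\fd{\sigma^{-4}(S)}=\kappa$ with $S_{-5}=1\ne\ve(f)$. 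A secondary point: you locate the delicate range at $-(n+1)<r<0$, but for your choice of $\fd{S}$ those shifts are literally $\sigma^{n+1+r}(\kappa)$ and are handled at once by shift-maximality; the range that actually needs the block-by-block analysis against $c_q=10^{k_1}11\cdots 110^{k_m}1$ is $r<-(n+1)$, where suffixes of $w_q1$ appear.

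The repair is to push the forward part \emph{strictly} below $\kappa$ at every shift, so that the equality case of condition~(d) never arises. The paper takes $\fd{S}=(w_q0)^\infty$ and $\bd{S}=(1\hw_q)^\infty$, so that $\bd{\sigma^{n+1}(S)}=\rhe(q)$; the key observations are that $(w_q0)^\infty$ is shift-maximal, that there is no shift-maximal sequence strictly between $\lhe(q)=(w_q1)^\infty$ and $(w_q0)^\infty$, and that $(w_q0)^\infty$ is not itself a tent-map kneading sequence (its minimal repeating word is odd) --- whence $\kappa\succ(w_q0)^\infty$ and every forward shift of $S$ is strictly below $\kappa$. If you want to salvage your approach you must make an analogous strict choice; taking the extremal continuation $u$ cannot work in general.
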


\begin{proof}
It is immediate from Theorem~\ref{thm:non-symmetric}
that $\bd{S}\preceq\rhe(q)$ for all $S\in J_f$. It is therefore only
necessary to exhibit an element~$S$ of~$J_f$ with
$\bd{\sigma^r(S)}=\rhe(q)$ for some~$r\in\Z$.

Let~$S\in\{0,1\}^\Z$ be given by $\fd{S}=(w_q0)^\infty$ and
$\bd{S}=(1\hw_q)^\infty$, so that
\[
\bd{\sigma^{n+1}(S)} = 10\hw_q(1\hw_q)^\infty = c_q(1\hw_q)^\infty =
\rhe(q),
\]
where $q=m/n$, using $c_q=w_q01 = 10\hw_q$ (as it is palindromic).
We will show that $\fd{\sigma^r(S)}\prec\kappa(f)$ for all~$r\in\Z$,
so that~$S\in J_f$ by Lemma~\ref{lem:admiss-invlim-forward}.

The sequence $(w_q0)^\infty$ is shift-maximal, since it is the saddle-node pair
of $(w_q1)^\infty = \lhe(q)$. Moreover, there do not exist shift-maximal
sequences~$s$ with $(w_q1)^\infty\prec s\prec (w_q0)^\infty$. For such a
sequence~$s$ would necessarily have initial subword $w_q$. If $s=w_q1\ldots$,
let $k\ge 1$ be greatest such that $s=(w_q1)^kt$ for some $t\in\{0,1\}^\N$.
Then $t\succ (w_q1)^\infty$, as $s\succ (w_q1)^\infty$, and since $w_q1$ is not
an initial subword of~$t$ we have $t\succ s$, contradicting the
shift-maximality of~$s$. On the other hand, if $s=w_q0t$ for some
$t\in\{0,1\}^\N$, then $t\succ(w_q0)^\infty$, as $s\prec (w_q0)^\infty$ and
$w_q0$ is odd, and so $t\succ s$, again contradicting shift-maximality.

Since $(w_q0)^\infty$ is not the kneading sequence of a tent map (its minimal
repeating word being odd) and $\kappa(f)\succ \lhe(q)$ is shift-maximal, we
have $\kappa(f)\succ (w_q0)^\infty$. Hence, for any~$r\ge 0$, we have
$\fd{\sigma^r(S)} = \sigma^r((w_q0)^\infty) \preceq (w_q0)^\infty \prec
\kappa(f)$, establishing the result in the case $r\ge 0$.

For the case~$r<0$, write $k_i=k_i(q)$ for $1\le i\le m$, so that we have
$w_q=10^{k_1}1^20^{k_2}1^2\ldots 1^20^{k_{m-1}}1^20^{k_m-1}$. Let $r<0$. If
$\fd{\sigma^r(S)}$ does not have initial subword $10$, then clearly
$\fd{\sigma^r(S)}\prec\kappa(f)$. If it does have initial subword~$10$, then
there is some $1\le i\le m$ such that
\[
  \fd{\sigma^r(S)} = 10^{k_i}1^2 0^{k_{i+1}}1^2\ldots 1^2 0^{k_m-1}1 \ldots
  \,\,\prec \,\, 10^{k_i}1^2 0^{k_{i+1}}1^2\ldots 1^2 0^{k_m-1}0\, (w_q0)^\infty,
\] which is a shift of  the shift-maximal sequence $(w_q 0)^\infty$. Therefore
$\fd{\sigma^r(S)}\prec (w_q0)^\infty \prec \kappa(f)$ as required.

\end{proof}

\bibliographystyle{amsplain}
\bibliography{itin}

\end{document}